\numberwithin{equation}{section}
\providecommand{\binom}[2]{{#1\choose#2}}
\renewcommand{\geq}{\geqslant}
\renewcommand{\leq}{\leqslant}
\newcommand{\Osh}{{\mathcal O}}                        
\renewcommand{\H}{\mathrm{H}}                          
\newcommand{\id}{\operatorname{id}}                    
\newcommand{\K}{\mathrm{K}}                            
\newcommand{\R}{\operatorname{R}}
\newcommand{\Ish}{\mathcal{I}}
\newcommand{\NS}{\operatorname{NS}} 
\newcommand{\End}{\operatorname{End}} 
\newcommand{\kk}{\mathbf{k}}
\newcommand{\Nrd}{\operatorname{Nrd}}
\newcommand{\N}{\operatorname{N}}
\renewcommand{\emptyset}{\varnothing}
\newcommand{\PP}{\mathbb{P}} 
\newcommand{\QQ}{\mathbb{Q}} 
\newcommand{\RR}{\mathbb{R}} 
\newcommand{\ZZ}{\mathbb{Z}} 
\newtheorem{theorem}{Theorem}[section]
\newtheorem{lemma}[theorem]{Lemma}
\newtheorem{corollary}[theorem]{Corollary}
\newtheorem{proposition}[theorem]{Proposition}
\theoremstyle{definition}
\newtheorem{defn}[theorem]{Definition}
\newtheorem{remark}[theorem]{Remark}
\newtheorem{example}[theorem]{Example}
\begin{document}

\title[Wedderburn components and index theory]{Wedderburn components, the index theorem and continuous Castelnuovo-Mumford regularity for semihomogeneous vector bundles
}
\author{Nathan Grieve}
\address{Department of Mathematics \& Computer Science,
Royal Military College of Canada, P.O. Box 17000,
Station Forces, Kingston, ON, K7K 7B4, Canada
}
\address{
School of Mathematics and Statistics, 4302 Herzberg Laboratories, Carleton University, 1125 Colonel By Drive, Ottawa, ON, K1S 5B6, Canada
}
\address{D\'{e}partement de math\'{e}matiques, Universit\'{e} du Qu\'{e}bec \'a Montr\'{e}al, Local PK-5151, 201 Avenue du Pr\'{e}sident-Kennedy, Montr\'{e}al, QC, H2X 3Y7, Canada}
\email{nathan.m.grieve@gmail.com}%

\begin{abstract} 
We study the property of \emph{continuous Castelnuovo-Mumford regularity}, for semihomogeneous vector bundles over a given Abelian variety, which was formulated in A. K\"{u}ronya and Y. Mustopa [Adv. Geom. \textbf{20} (2020), no.~3,
  401--412].  Our main result gives a novel description thereof.  It is expressed in terms of certain normalized polynomial functions that are obtained via the Wedderburn decomposition of the Abelian variety's endomorphism algebra.  This result builds on earlier work of Mumford and Kempf and applies the form of the Riemann-Roch Theorem that was established in N. Grieve [New York J. Math. \textbf{23} (2017), 1087--1110].  In a complementary direction, we explain how these topics pertain to the \emph{Index} and \emph{Generic Vanishing Theory} conditions for simple semihomogeneous vector bundles.  In doing so, we refine results from M.  Gulbrandsen [Matematiche (Catania) \textbf{63}
  (2008), no.~1, 123--137], N. Grieve [Internat. J. Math. \textbf{25} (2014), no.~4, 1450036, 31] and D. Mumford [Questions on
  {A}lgebraic {V}arieties ({C}.{I}.{M}.{E}., {III} {C}iclo, {V}arenna, 1969),
  Edizioni Cremonese, Rome, 1970, pp.~29--100].  
\end{abstract}
\thanks{\emph{Mathematics Subject Classification (2020):} 14F06, 14K12, 14F17. \\
\emph{Key Words:} 
Abelian varieties, Mukai regularity, continuous Castelnuovo-Mumford regularity, semihomogeneous vector bundles, Generic Vanishing Theory. \\
I thank the Natural Sciences and Engineering Research Council of Canada for their support through my grants DGECR-2021-00218 and RGPIN-2021-03821. 
}
\maketitle

\section{Introduction}

Recall, that a coherent sheaf $\mathcal{F}$ on a projective variety $X$ is \emph{$m$-regular} with respect to a globally generated ample line bundle $\Osh_X(1)$, if 
$$
\H^i(X,\mathcal{F}(m-i)) = 0 \text{, for all $i > 0$.}
$$
The concept of $m$-regularity was formulated by Mumford  \cite[Lecture 14]{Mum66}.  It remains a fundamental cohomological invariant.  For example, if $\mathcal{F}$ is $m$-regular and  $k > m$, then $\H^0(X,\mathcal{F}(k))$ is spanned by the image of the natural map
$$
\H^0(X,\mathcal{F}(k-1)) \otimes \H^0(X,\Osh_X(1)) \rightarrow \H^0(X,\mathcal{F}(k)).
$$  
In particular, if $\mathcal{F}$ is $m$-regular and $k \geq m$, then 
$$
\mathcal{F}(k) := \mathcal{F} \otimes_{\Osh_X} \Osh_X(k)
$$
is a globally generated $\Osh_X$-module.

The \emph{Castelnuovo-Mumford regularity} of $\mathcal{F}$, with respect to $\Osh_X(1)$, is 
defined to be the smallest integer $m$ for which $\mathcal{F}$ is $m$-regular.  Effective criteria for $m$-regularity is a foundational problem within algebraic geometry.   A starting point is the following theorem of Mumford \cite[p.~101]{Mum66}.

\begin{theorem}[{\cite[p.~101]{Mum66}}]
For all nonnegative integers $n$, there exists a polynomial $F_n(x_0,\dots,x_n)$ so that for all coherent sheaves of ideals $\Ish$ on $\PP^n$, if $a_0,\dots,a_n \in \ZZ$ are defined by the condition that
$$
\chi(\Ish(m)) = \sum_{i=0}^n(-1)h^i(\PP^n,\Ish(m)) = \sum_{i=0}^n a_i \binom{m}{i} \text{, }
$$ 
then $\Ish$ is $F_n(a_0,\dots,a_n)$-regular with respect to the tautological line bundle $\Osh_{\PP^n}(1)$.
\end{theorem}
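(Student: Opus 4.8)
The plan is to argue by induction on $n$, cutting the dimension down by restricting to a general hyperplane. The base case $n = 0$ is immediate: on a point every coherent sheaf has vanishing higher cohomology, so it is $m$-regular for every $m$ and one may take $F_0 \equiv 0$.

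For the inductive step I fix $\Ish \subseteq \Osh_{\PP^n}$ and choose a general hyperplane $H \cong \PP^{n-1}$, with linear equation $\ell$, avoiding the finitely many associated points of $\Osh_{\PP^n}/\Ish$. Then $\ell$ is a non-zero-divisor on $\Osh_{\PP^n}/\Ish$, the image $\overline{\Ish}$ of $\Ish$ in $\Osh_H$ is a coherent sheaf of ideals on $H$, and there is a short exact sequence
$$0 \to \Ish(-1) \xrightarrow{\ell} \Ish \to \overline{\Ish} \to 0.$$
First I would record that the Hilbert polynomial of $\overline{\Ish}$ is determined by that of $\Ish$: additivity of Euler characteristics gives $\chi(\overline{\Ish}(m)) = \chi(\Ish(m)) - \chi(\Ish(m-1))$, so by Pascal's rule $\chi(\overline{\Ish}(m)) = \sum_{i} a_i \binom{m-1}{i-1}$. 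Rewriting this in the basis $\{\binom{m}{j}\}$ via an integral, triangular change of coordinates, the coefficients $\overline{a}_0, \dots, \overline{a}_{n-1}$ of $\overline{\Ish}$ are universal integer-linear functions of $a_0, \dots, a_n$, independent of the choice of $H$. By the inductive hypothesis, $\overline{\Ish}$ is $\overline{m}$-regular on $\PP^{n-1}$, where $\overline{m} := F_{n-1}(\overline{a}_0, \dots, \overline{a}_{n-1})$ is a polynomial in $a_0, \dots, a_n$.

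Next I would descend this regularity to $\Ish$ by the standard hyperplane-restriction mechanism. In the long exact cohomology sequence attached to the displayed sequence twisted by $\Osh_{\PP^n}(k)$, the map $\H^i(\Ish(k-1)) \xrightarrow{\ell} \H^i(\Ish(k))$ is injective whenever $\H^{i-1}(\overline{\Ish}(k)) = 0$ and surjective whenever $\H^i(\overline{\Ish}(k)) = 0$. Since $\overline{\Ish}$ is $\overline{m}$-regular, hence $m'$-regular for all $m' \geq \overline{m}$, a chain of injections running up to the Serre-vanishing range yields $\H^i(\Ish(k)) = 0$ for all $i \geq 2$ and $k \geq \overline{m} - i$. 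Consequently $\Ish$ is $\overline{m}$-regular as soon as $\H^1(\Ish(\overline{m}-1)) = 0$, and it remains only to locate, in terms of the $a_i$, a twist beyond which $\H^1$ vanishes.

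The heart of the argument — and the step I expect to be the main obstacle — is the quantitative control of $\phi(k) := h^1(\PP^n, \Ish(k))$. For $k \geq \overline{m} - 2$ one has $\H^1(\overline{\Ish}(k+1)) = 0$, so $\times \ell \colon \H^1(\Ish(k)) \to \H^1(\Ish(k+1))$ is surjective and $\phi$ is non-increasing; moreover $\H^i(\Ish(k)) = 0$ for $i \geq 2$ in this range, so $\phi(k) = h^0(\Ish(k)) - \chi(\Ish(k))$. The crucial and delicate point is that $\phi$ is in fact \emph{strictly} decreasing as long as it is positive: a propagation argument — relying on $\ell$ being a general (filter-regular) linear form and on the finite generation of $\bigoplus_k \H^1(\Ish(k))$ over the homogeneous coordinate ring — shows that an equality $\phi(k) = \phi(k+1)$ would force $\phi$ to stabilize at a positive value, contradicting Serre vanishing. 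Granting this, $\phi$ reaches $0$ within $\phi(\overline{m}-1)$ steps, and since $\Ish \subseteq \Osh_{\PP^n}$ gives $h^0(\Ish(\overline{m}-1)) \leq \binom{\overline{m}-1+n}{n}$, the identity above bounds $\phi(\overline{m}-1)$ by a polynomial in the $a_i$. Thus $\H^1(\Ish(k)) = 0$ for $k \geq \overline{m} - 1 + \phi(\overline{m}-1)$, and defining $F_n$ to be the resulting polynomial expression in $a_0, \dots, a_n$ exhibits $\Ish$ as $F_n(a_0, \dots, a_n)$-regular, completing the induction.
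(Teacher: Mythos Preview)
The paper does not actually prove this theorem: it is stated in the introduction purely as background, with a citation to \cite[p.~101]{Mum66}, and no argument is given anywhere in the text. So there is no ``paper's own proof'' to compare your proposal against.

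That said, your outline is essentially Mumford's original argument from \emph{Lectures on curves on an algebraic surface}: induction on $n$, restriction to a generic hyperplane to get an ideal sheaf $\overline{\Ish}$ on $\PP^{n-1}$ whose Hilbert coefficients are integer-linear in the $a_i$, use of the inductive hypothesis to kill $\H^i$ for $i\geq 2$ in the relevant range, and then the strict-decrease argument for $\phi(k)=h^1(\Ish(k))$ to bound the last remaining twist. The one place where your sketch is slightly loose is the justification of strict decrease: Mumford's actual argument does not invoke filter-regularity or finite generation of the $\H^1$-module directly, but rather observes that if $\phi(k)=\phi(k+1)>0$ then the surjection $\H^0(\Ish(k+1))\to\H^0(\overline{\Ish}(k+1))$ must also be surjective, and then uses the global generation of $\overline{\Ish}(k+1)$ (from its regularity) together with the multiplication map to propagate surjectivity and force $\phi$ to be eventually constant and positive, contradicting Serre vanishing. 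Your version reaches the same conclusion, so the proposal is correct; it simply has nothing in this paper to be compared with.
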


Computationally effective methods for calculation of the Castelnuovo-Mumford regularity, $\operatorname{reg}_{\Osh_{\PP^n}(1)}(\mathcal{F})$, for coherent sheaves $\mathcal{F}$ on projective $n$-space $\PP^n$, follow from work of Bayer and Stillman \cite[Theorem 1.10]{Bayer:Stillman:1987}.  We refer to \cite[Section 1.8]{Laz1} for more details about Castelnuovo-Mumford regularity.

Turning to the context of Abelian varieties, and more generally irregular varieties, it was noted by Green and Lazarsfeld, in \cite{Green:Lazarsfeld:87:GV}, building on work of Mumford, \cite[Lecture 14]{Mum66}, Mukai, \cite{Muk78} and \cite{Mukai-duality}, among others, that measures of cohomological positivity and criterion for global generation of coherent sheaves, with respect to  ample line bundles, is achieved via the \emph{Generic Vanishing Theory}.

A systematic development of that viewpoint, from the perspective of the Fourier-Mukai transform was initiated by Hacon \cite{Hacon:2004}.  It was developed further in a series of articles by Pareschi and Popa (including \cite{Pareschi:Popa:2003}, \cite{Par:Popa:II} and \cite{Par:Popa:III}).  As one example, the property of \emph{Mukai regularity}, see Section \ref{GV:theory:prelim}, for sheaves on a given Abelian variety, was introduced in \cite{Pareschi:Popa:2003}.  A main result is the \emph{$M$-regularity criterion} \cite[p. 285]{Pareschi:Popa:2003}.

\begin{theorem}[{\cite[p.~285]{Pareschi:Popa:2003}}]
Let $\mathcal{F}$ be a coherent sheaf and $\mathcal{L}$ a line bundle on an Abelian variety $A$.  If $\mathcal{F}$ and $\mathcal{L}$ are $M$-regular $\Osh_A$-modules, then $\mathcal{F} \otimes \mathcal{L}$ is globally generated.
\end{theorem}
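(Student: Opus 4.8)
The plan is to prove the criterion in two stages, isolating an intermediate positivity notion that behaves well under tensor product. Write $\hat{A} = \Pic^0(A)$ for the dual Abelian variety, $\mathcal{P}$ for the Poincar\'{e} bundle on $A \times \hat{A}$, and $P_\xi$ for the degree-zero line bundle parametrized by $\xi \in \hat{A}$; recall that a coherent sheaf $\mathcal{G}$ is $M$-regular precisely when its Fourier-Mukai transforms satisfy $\operatorname{codim}_{\hat{A}} \operatorname{Supp} \R^i \hat{S}(\mathcal{G}) > i$ for every $i > 0$. I would work through the auxiliary notion of \emph{continuous global generation}: $\mathcal{G}$ is continuously globally generated if, for every nonempty open $U \subseteq \hat{A}$, the evaluation map $\bigoplus_{\xi \in U} \H^0(A, \mathcal{G} \otimes P_\xi) \otimes P_\xi^{-1} \to \mathcal{G}$ is surjective.

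First I would show that $M$-regularity implies continuous global generation. The mechanism is the Fourier-Mukai transform $\R\hat{S}$ together with Mukai's inversion theorem, which recovers $\mathcal{G}$ (up to the involution $(-1)$ and a shift by $\dim A$) from $\R\hat{S}(\mathcal{G})$. The obstruction to surjectivity of the continuous evaluation map over a given open set is governed by the higher transforms $\R^i\hat{S}(\mathcal{G})$ with $i > 0$, and the defining codimension bounds for $M$-regularity force these obstruction sheaves to be supported in codimension strictly larger than $i$; a cohomology-and-base-change analysis then shows that the associated degeneracy loci are proper closed subsets, so the evaluation is surjective over a dense open subset of $\hat{A}$, which is exactly continuous global generation. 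I expect this to be the technical heart of the argument.

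Second I would prove the multiplicativity step: if a coherent sheaf $\mathcal{F}$ is continuously globally generated and a line bundle $\mathcal{L}$ is continuously globally generated, then $\mathcal{F} \otimes \mathcal{L}$ is globally generated. Fix $x \in A$ and consider, for $\xi \in \hat{A}$, the composite of the multiplication map $\H^0(A, \mathcal{F} \otimes P_\xi) \otimes \H^0(A, \mathcal{L} \otimes P_\xi^{-1}) \to \H^0(A, \mathcal{F} \otimes \mathcal{L})$ with evaluation at $x$. Since $\mathcal{L}$ is continuously globally generated, the locus $W \subseteq \hat{A}$ of those $\beta$ for which $\mathcal{L} \otimes P_\beta$ carries a section not vanishing at $x$ is constructible and meets every nonempty open set, hence contains a dense open subset; thus $-W$ is dense open as well. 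Choosing a nonempty open $U$ inside $-W$ and applying continuous global generation of $\mathcal{F}$ over $U$, I obtain finitely many $\xi_1, \dots, \xi_n \in U$ whose sections generate the fibre $\mathcal{F} \otimes k(x)$. For each $\xi_i$, continuous global generation of $\mathcal{L}$ provides a section $s_i \in \H^0(A, \mathcal{L} \otimes P_{\xi_i}^{-1})$ with $s_i(x) \neq 0$; the products $t \cdot s_i$, as $t$ ranges over $\H^0(A, \mathcal{F} \otimes P_{\xi_i})$, are global sections of $\mathcal{F} \otimes \mathcal{L}$ whose values at $x$ span $\mathcal{F} \otimes k(x)$. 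As $x$ is arbitrary, $\mathcal{F} \otimes \mathcal{L}$ is globally generated.

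Applying the first stage to both $\mathcal{F}$ and $\mathcal{L}$ and then feeding the output into the second stage completes the proof. The main obstacle is the first stage: converting the codimension-of-support condition into surjectivity of the continuous evaluation maps requires careful control of the Fourier-Mukai transform and of the base-change behaviour of the relevant cohomology. The coordination of open sets in the second stage, namely ensuring that the $\xi_i$ generating $\mathcal{F}$ at $x$ lie among those $\beta$ with $-\beta \in W$, is a secondary point that is resolved by the density of constructible sets meeting every open set.
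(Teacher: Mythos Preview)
The paper does not supply its own proof of this statement: it is quoted in the introduction as background, with a citation to Pareschi--Popa \cite[p.~285]{Pareschi:Popa:2003}, and no argument is given anywhere in the text. There is therefore nothing in the paper against which to compare your proposal.

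That said, your two-stage outline is the correct strategy and in fact reproduces the architecture of the original Pareschi--Popa proof: first, $M$-regularity forces continuous global generation (their Proposition~2.13); second, the tensor product of a continuously globally generated sheaf with a continuously globally generated line bundle is globally generated (their Proposition~2.12). Your stage-two argument, pairing a generating family $\{\xi_i\}$ for $\mathcal{F}$ at $x$ with sections of $\mathcal{L}\otimes P_{\xi_i}^{-1}$ nonvanishing at $x$, is essentially complete. Your stage-one sketch is accurate in spirit but, as you concede, omits the substance: in the original, the key input is not a direct base-change argument on the $\R^i\hat{S}(\mathcal{G})$ but rather a global-generation statement for the Fourier--Mukai transform combined with a duality/Ext computation that converts the codimension bound into surjectivity of the relevant multiplication maps. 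If you flesh out that step you will have recovered the published proof.
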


More recently, motivated by work of Barja, Pardini and Stoppino, \cite{Barja:Pardini:Stoppiono:2020}, K\"{u}ronya and Mustopa, in \cite{Kuronya:Mustopa:2020}, formulated a concept of \emph{continuous Castelnuovo-Mumford regularity,} denoted by $\operatorname{reg}_{\mathrm{cont}}(\mathcal{F},\Osh_X(1))$, for coherent sheaves $\mathcal{F}$ on a given polarized irregular variety $(X, \Osh_X(1))$.  Briefly, this is defined to be the smallest integer $m$ for which the \emph{cohomological support loci} 
$ \operatorname{V}^i(X,\mathcal{F}(m-i))\text{,}$ for all $i>0$, are proper Zariski closed subsets of $\operatorname{Pic}^0(X)$, 
the identity component of the Picard group.  

For the case of Abelian varieties,  K\"{u}ronya and Mustopa's main result, \cite[Theorem A]{Kuronya:Mustopa:2020}, implies that  continuous Castelnuovo-Mumford regularity is a numerical property for semihomogeneous vector bundles.  It builds on \cite{Grieve-cup-prod-ab-var}.   

\begin{theorem}[{\cite[See Theorem A and Theorem B]{Kuronya:Mustopa:2020}}]\label{Kuronya:Mustopa:ThmA}
Let $(A,\Osh(1))$ be a polarized Abelian variety.  The following assertions hold true.
\begin{enumerate}
\item[(i)]{Then there exists a piecewise constant function
\begin{equation}\label{cont:reg:function}
\rho_{\Osh(1)} \colon \N^1_\RR(A) \rightarrow \ZZ
\end{equation}
which has the property that
$$
\operatorname{reg}_{\mathrm{cont}}(\mathcal{E},\Osh(1)) = \rho_{\Osh(1)}(\operatorname{det}(\mathcal{E}) / \operatorname{rank}(\mathcal{E}))
$$
for each semihomogeneous vector bundle $\mathcal{E}$ over $A$.  (Here, we identify $\Osh(1)$ and $\operatorname{det}(\mathcal{E})$ with their classes in $\N^1_{\RR}(A)$ the real N\'eron-Severi space of $A$.)
}
\item[(ii)]{If $\mathcal{E}$ is a semihomogeneous vector bundle on $A$ with the property that the class of $\det(\mathcal{E})$ is a rational multiple of the class of $\Osh(1)$, then $\operatorname{reg}_{\operatorname{cont}}(\mathcal{E},\Osh(1))$ is equal to the smallest integer $m$ for which $\mathcal{E}(m-g)$ is a generic vanishing sheaf.
}
\end{enumerate}
\end{theorem}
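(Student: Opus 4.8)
The plan is to reduce both assertions to the \emph{Index Theorem} for semihomogeneous vector bundles, which describes the generic cohomology of such a bundle on $A$. Concretely, if $\mathcal{E}$ is a semihomogeneous vector bundle whose normalized slope $\det(\mathcal{E})/\operatorname{rank}(\mathcal{E})$ determines a nondegenerate class in $\N^1_\RR(A)$, then there is a single integer $i_0 = i_0(\mathcal{E})$, the \emph{index} of $\mathcal{E}$, for which $\H^{i_0}(A,\mathcal{E} \otimes P) \neq 0$ while $\H^i(A,\mathcal{E} \otimes P) = 0$ for all $i \neq i_0$ and generic $P \in \operatorname{Pic}^0(A)$; moreover $i_0$ is computed as the number of negative eigenvalues of the Hermitian form attached to the slope, and the single nonzero Betti number equals $|\chi(\mathcal{E})|$, which is evaluated by the Riemann-Roch formula of \cite{Grieve-cup-prod-ab-var}. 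I would first record that, for each $i > 0$, the locus $\operatorname{V}^i(A,\mathcal{E}(m-i))$ is a proper subset of $\operatorname{Pic}^0(A)$ precisely when $\H^i(A,\mathcal{E}(m-i) \otimes P)$ vanishes for generic $P$, and that by the Index Theorem this fails only when $i$ equals the index of the twisted bundle $\mathcal{E}(m-i)$. The degenerate case, where the slope of $\mathcal{E}(m-i)$ lies on a wall, must be treated separately: there $\mathcal{E}(m-i)$ acquires, after a suitable isogeny, a homogeneous factor, and one checks directly that the support loci remain finite, hence proper.

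For assertion (i), the central point is that the index of $\mathcal{E}(m-i)$ depends on $\mathcal{E}$ only through the class of its slope $\mu := \det(\mathcal{E})/\operatorname{rank}(\mathcal{E})$ in $\N^1_\RR(A)$, since twisting by $\Osh(m-i)$ shifts the slope to $\mu + (m-i)[\Osh(1)]$ and the polarization class $[\Osh(1)]$ is fixed. Thus the set of integers $m$ for which every $\operatorname{V}^i(A,\mathcal{E}(m-i))$, $i>0$, is proper is a function of $\mu$ alone, and I would define $\rho_{\Osh(1)}(\mu)$ to be the least such $m$; this yields $\operatorname{reg}_{\mathrm{cont}}(\mathcal{E},\Osh(1)) = \rho_{\Osh(1)}(\mu)$. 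That $\rho_{\Osh(1)}$ is piecewise constant follows because the index, being the signature of a Hermitian form that varies linearly with $\mu$, is locally constant away from the finitely many \emph{walls} on which the form degenerates; consequently the least admissible $m$ jumps only as $\mu$ crosses such a wall.

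For assertion (ii), I would specialize to the case where $\mu$ is a rational multiple of $[\Osh(1)]$, say $\mu = \tfrac{a}{b}\,[\Osh(1)]$. Then every twist $\mathcal{E}(t)$ has slope a multiple of the single ample class $[\Osh(1)]$, so the attached Hermitian form is a scalar multiple of a positive-definite one and the index is forced to take only the values $0$ (slope positive), $g$ (slope negative), or the degenerate value (slope zero), with $g = \dim A$. Running the analysis of the first paragraph, the only obstruction to properness among the $\operatorname{V}^i$, $i > 0$, comes from $i = g$, and one finds that every $\operatorname{V}^i(A,\mathcal{E}(m-i))$ is proper exactly when the slope of $\mathcal{E}(m-g)$ is nonnegative, i.e. when $m \geq g - \tfrac{a}{b}$. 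On the other hand, the generic-vanishing condition for $\mathcal{E}(m-g)$, namely $\operatorname{codim} \operatorname{V}^i(A,\mathcal{E}(m-g)) \geq i$ for all $i > 0$, is governed by the \emph{same} trichotomy: it holds when the slope of $\mathcal{E}(m-g)$ is positive (all higher $\operatorname{V}^i$ empty) or zero (all $\operatorname{V}^i$ finite), and fails when the slope is negative (since then $\operatorname{V}^g$ is all of $\operatorname{Pic}^0(A)$). Hence $\mathcal{E}(m-g)$ is a generic vanishing sheaf precisely when $m \geq g - \tfrac{a}{b}$, and the two least admissible values of $m$ coincide, both equal to $\lceil g - \tfrac{a}{b}\rceil$.

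I expect the principal difficulty to lie in the careful treatment of the degenerate, on-wall cases, where the generic cohomology is no longer naively concentrated in a single degree: one must pass to a suitable isogeny to split off the homogeneous part, control the resulting support loci, and verify that the boundary value $m = g - \tfrac{a}{b}$ (when $\tfrac{a}{b} \in \ZZ$) genuinely lies in the admissible range on both the regularity and the generic-vanishing sides. A secondary technical point is to confirm that the Riemann-Roch computation of $\chi$ correctly detects the index across walls, so that the piecewise-constant function $\rho_{\Osh(1)}$ of (i) is well-defined and takes the stated value in the rational-multiple setting of (ii).
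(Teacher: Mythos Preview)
The paper does not prove Theorem~\ref{Kuronya:Mustopa:ThmA}; it is quoted from \cite{Kuronya:Mustopa:2020} (their Theorems~A and~B) as established input and is then \emph{used} in the proof of Theorem~\ref{reduced:norm:cont:reg} in Section~\ref{proof:reduced:norm:cont:reg}. There is therefore no in-paper argument against which to compare your proposal.

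For what it is worth, your outline is broadly consistent with the strategy of \cite{Kuronya:Mustopa:2020} and with the supporting material assembled in this paper (Proposition~\ref{line:bundle:index}, Theorem~\ref{semi:homog:index:thm}): reduce to simple semihomogeneous summands, use that the (weak) index of $\mathcal{E}(m-i)$ depends only on the numerical class $\mu+(m-i)[\Osh(1)]$ with $\mu=\det(\mathcal{E})/\operatorname{rank}(\mathcal{E})$, and read off the least admissible $m$. One inaccuracy to flag: in the degenerate (on-wall) case of part~(i) you assert that the cohomological support loci are \emph{finite}. In general they are translates of proper abelian subvarieties of $\hat A$ and can have positive dimension; for instance, for $\mathcal{L}=\Osh_E\boxtimes\Osh_E(x)$ on $E\times E$ one computes $\operatorname{V}^0(A,\mathcal{L})=\{0\}\times\hat E$. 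What the argument actually requires, and what does hold (via Proposition~\ref{line:bundle:index} together with the isogeny pushforward description), is only that these loci are \emph{proper} in $\hat A$. Your finiteness claim is valid in the restricted setting of part~(ii), since a zero-slope simple semihomogeneous bundle is homogeneous and its support loci are single points.
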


Our purpose here is to build on, and refine, these results from \cite{Kuronya:Mustopa:2020}. For instance, note that our formulation of Theorem \ref{Kuronya:Mustopa:ThmA}, does not require global generation on the polarizing line bundle.  Nor does it ask that the algebraically closed base field be of characteristic zero.  Moreover, 
Theorem \ref{reduced:norm:cont:reg}, below, makes explicit the manner in which the function \eqref{cont:reg:function} depends on both the Wedderburn decomposition and the isogney class of the given Abelian variety.  A key point is \cite[Corollary 4.2]{Grieve-cup-prod-ab-var}, which builds upon the index theorem of Mumford \cite[Chapter 16]{Mum:v1} and \cite[Appendix]{Mum:Quad:Eqns}.  

Indeed, in the present article, we apply the main result of \cite{Grieve-cup-prod-ab-var} to show how Albert's Theorem and the Poincar\'{e} Reducibility Theorem, for a given Abelian variety, are reflected in these cohomological properties for semihomogeneous vector bundles.
Our results here allow for an explicit determination of the property of continuous Castelnuovo-Mumford regularity for semihomogeneous vector bundles.  It complements the numerical description from  \cite{Kuronya:Mustopa:2020}.  It is expressed in terms of certain normalized polynomials that are determined by the reduced norms of the Wedderburn components of the endomorphism algebra.

As some additional results, which are of an independent interest, we build on the works \cite{Grieve-cup-prod-ab-var}, \cite{Grieve:theta} and \cite{Grieve:R-R:abVars}, for example, which have origins in Mumford's index theorem for line bundles on Abelian varieties \cite[p.~ 156]{Mum:v1}.  In this regard, our main results are Theorem \ref{semi:homog:index:thm} and Corollary \ref{Semi:homog:RR}.  Together, they improve upon  \cite[Appendix Theorem 2]{Mum:Quad:Eqns} and the main results from \cite{Grieve:R-R:abVars}.  Indeed, they establish more general versions of those results which apply to simple semihomogeneous vector bundles.  They also refine \cite[Proposition 2.1]{Grieve-cup-prod-ab-var}.

Before stating our main results that are in the direction of continuous Castelnuovo-Mumford regularity, see Theorem \ref{reduced:norm:cont:reg} below, we formulate, in Theorem \ref{R-R-index-lb}, a form of the Riemann-Roch Theorem for line bundles on a given Abelian variety.  It collects results from \cite{Grieve:R-R:abVars} and 
encompasses the traditional Riemann-Roch and Index Theorems for line bundles on Abelian varieties.  It is proved by combining \cite[Theorems 4.1 and 4.4]{Grieve:R-R:abVars}.  

Recall, that the classical formulation of these results were given by Mumford \cite{Mum:v1}, with subsequent refinements by Kempf and Ramanujam \cite{Mum:Quad:Eqns}.  We describe precisely, in Section \ref{Endomorphism:algebra:notation}, the normalized polynomial \eqref{normalized:polynomial:function} which arises in the statement of Theorem \ref{R-R-index-lb}.   As indicated there, this normalized polynomial, \eqref{normalized:polynomial:function}, reflects the structure of the Wedderburn decomposition of the Abelian variety.

In this article, if 
$$D \in \operatorname{NS}_{\QQ}(A)$$ 
is a rational divisor class on a $g$-dimensional Abelian variety $A$, then we define its \emph{index} $\operatorname{i}(D)$ to be the number of positive roots, counted with multiplicities, of any, and in fact all, of its Hilbert polynomials
\begin{equation}\label{Hilb:poly}
\chi(N\lambda + D) := \frac{(N\lambda + D)^g}{ g! } \text{.}
\end{equation}
Here, $\lambda$ is an ample divisor class on $A$ and 
$$g := \dim A $$
is the dimension of $A$.

We refer to \cite[p. 156]{Mum:v1} and \cite[Appendix Theorem 2]{Mum:Quad:Eqns} for more details about the fact that the index $\operatorname{i}(D)$, as defined here, is well-defined (i.e., independent of the choice of polarization $\lambda$).  In particular, to establish that the index of rational divisor classes is well-defined, one first reduces to the case of integral divisor classes.

Note that our concept of index here, differs, in general, from that which is defined in \cite[Section 2.1]{Grieve-cup-prod-ab-var}.  Moreover, it should not be confused with the related notion of what we call \emph{weak index} and which we denote by $\operatorname{j}(D)$.  This concept of weak index is defined for integral divisor classes
$$D \in \operatorname{NS}(A) \text{,}$$ 
and is often times referred to as the \emph{index} in the context of the Generic Vanishing Theory.
(See Section \ref{GV:theory:prelim}.)

For the case of nondegenerate line bundles, i.e., when
$$
\chi(D) \not = 0 \text{,}
$$
the concepts of index and weak index coincide.  (Theorem 
\ref{semi:homog:index:thm} establishes a more general instance of this fact.)

Before stating our main results, especially Theorem \ref{reduced:norm:cont:reg}, recall, that fixing a polarization $\lambda$ on $A$, if
$$
r_\lambda \colon \End^0(A) \rightarrow \End^0(A)
$$
denotes the Rostai involution and
$$
\End_{\lambda}^0(A) := \{ \alpha \in \End^0(A) : r_\lambda(\alpha) = \alpha  \} \text{,}
$$
then there is an induced isomorphism
$$
\NS_{\QQ}(A) \simeq \End^0_{\lambda}(A) \text{.}
$$
We refer to Section \ref{endomorphism:algebras} for further details.

Moreover, recall that the index, as defined here, arises in the Riemann-Roch Theorem.  This is the content of Theorem \ref{R-R-index-lb} which we deduce from \cite{Grieve:R-R:abVars}.

\begin{theorem}\label{R-R-index-lb} Suppose that 
$$A = A_1^{r_1}\times \dots \times A_k^{r_k}$$ 
is an Abelian variety with $A_i$ simple and pairwise nonisogenous Abelian varieties.  Fix an ample divisor class $\lambda$ on $A$.  Put 
$$g := \dim A\text{.}$$ 
Suppose that 
$$f \colon B \rightarrow A$$ 
is an isogeny from a given Abelian variety $B$.  Let 
$$
D \in \operatorname{NS}_{\QQ}(B) := \operatorname{NS}(B) \otimes_{\ZZ} \QQ
$$ 
be a rational divisor class on $B$.  Then, within this context, there exists a normalized polynomial function
\begin{equation}\label{normalized:polynomial:function}
\operatorname{pNrd}_{\lambda}(\cdot) \colon \operatorname{End}_\lambda^0(A) \rightarrow \QQ
\end{equation}
so that the following two assertions hold true.
\begin{enumerate}
\item[(i)]{
If 
$$\alpha = \Phi_{f^*(\lambda)}(D)$$ is the image of 
$$[D] \in \End^0_{f^* \lambda}(B)\text{,}$$ 
in $\End^0_\lambda(A)$, under the homomorphism that is induced by $f$, then 
$$
\chi(D) = \frac{ (D^g)}{ g!} = \sqrt{\operatorname{deg} \phi_{f^* \lambda}} \operatorname{pNrd}_{ \lambda}(\alpha) \text{.}
$$
}
\item[(ii)]{
The \emph{index} $\operatorname{i}(D)$ of $D$ is equal to the number of positive roots, counted with multiplicity, of the polynomial
$$
p_{D,f^*\lambda}(N) := \operatorname{pNrd}_\lambda(N \operatorname{id}_A + \alpha) \text{.}
$$
}
\item[(iii)]{
In particular, if $\mathcal{L}$ is a line bundle on $B$ with class
$$D \in \operatorname{NS}(B)\text{,}$$ 
then
$$
\H^j(B,\mathcal{L}) = 0
$$
for 
$$0\leq j < \text{the number of $p_{D,f^*\lambda}(N)$'s positive roots}$$ 
and
$$
\H^{g-j}(B,\mathcal{L}) = 0
$$
for 
$$0 \leq j < \text{the number of $p_{D,f^*\lambda}(N)$'s negative roots.}$$
}
\end{enumerate}
\end{theorem}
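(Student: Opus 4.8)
The plan is to deduce the three assertions by combining the reduced-norm form of Riemann-Roch, namely \cite[Theorem 4.1]{Grieve:R-R:abVars}, with the index-theoretic statement \cite[Theorem 4.4]{Grieve:R-R:abVars}, and then transporting everything from $B$ to $A$ along the isogeny $f$. Throughout I would use the isomorphism $\NS_{\QQ}(A) \simeq \End^0_{\lambda}(A)$ and its analogue on $B$, under which the polarization class corresponds to the identity endomorphism and an arbitrary class $D$ on $B$ corresponds to the symmetric element $[D] \in \End^0_{f^*\lambda}(B)$. The normalized function $\operatorname{pNrd}_{\lambda}$ from Section \ref{Endomorphism:algebra:notation} is, by construction, assembled from the reduced norms of the Wedderburn components $\M_{r_i}(\Delta_i)$ of $\End^0(A)$, and it is normalized so that it recovers the Euler characteristic after multiplication by the degree constant $\sqrt{\deg \phi_{f^*\lambda}}$; recall here that $\sqrt{\deg \phi_{f^*\lambda}} = \chi(f^*\lambda)$, since $\deg \phi_L = \chi(L)^2$ for an ample class $L$.

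For assertion (i), the first equality $\chi(D) = (D^g)/g!$ is the classical Riemann-Roch theorem on an Abelian variety, where the contribution of the Todd class is trivial because the tangent bundle is trivial. The content is the second equality. Here I would pull the class $D$ back through $f$: since $f$ is an isogeny it induces an isomorphism $\End^0(B) \simeq \End^0(A)$ of semisimple algebras carrying $[D]$ to $\alpha$, and this is compatible with the reduced norms computed on each Wedderburn factor. Matching the self-intersection number $(D^g)/g!$ against the resulting reduced-norm expression is then precisely \cite[Theorem 4.1]{Grieve:R-R:abVars}, and the factor $\sqrt{\deg \phi_{f^*\lambda}}$ is exactly the normalization constant converting the reduced-norm value $\operatorname{pNrd}_{\lambda}(\alpha)$ into the honest Euler characteristic.

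Assertion (ii) I would obtain by applying (i) not to $D$ alone but to the pencil $N f^*\lambda + D$, whose associated symmetric endomorphism is $N\operatorname{id}_A + \alpha$. This yields the identity of polynomials in $N$,
\begin{equation*}
\chi(N f^*\lambda + D) = \sqrt{\deg \phi_{f^*\lambda}}\; p_{D,f^*\lambda}(N) \text{,}
\end{equation*}
and since the scalar $\sqrt{\deg \phi_{f^*\lambda}}$ is strictly positive, the two sides have exactly the same positive roots, with the same multiplicities. By the very definition of the index $\operatorname{i}(D)$ as the number of positive roots of the Hilbert polynomial \eqref{Hilb:poly}, this forces $\operatorname{i}(D)$ to equal the number of positive roots of $p_{D,f^*\lambda}(N)$. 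The positivity of the Rosati involution guarantees that $\alpha$ is diagonalizable with real eigenvalues, so these roots are all real and the count is unambiguous.

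Finally, assertion (iii) is the cohomological payoff. Once (ii) identifies $\operatorname{i}(D)$ with the number of positive roots of $p_{D,f^*\lambda}$, I would invoke Mumford's index theorem \cite[p.~156]{Mum:v1}, in the refined form \cite[Theorem 4.4]{Grieve:R-R:abVars} and \cite[Appendix]{Mum:Quad:Eqns}, to read off the vanishing of $\H^j(B,\mathcal{L})$ below the index, and, by the symmetric argument applied to the negative roots (equivalently via Serre duality), the vanishing of $\H^{g-j}(B,\mathcal{L})$ below the number of negative roots; this refined form also accommodates the possibly degenerate case, where positive and negative roots together do not account for all of the degree. I expect the main obstacle to lie not in the cohomological vanishing, which is essentially quoted, but in the bookkeeping of the isogeny transfer: one must verify that the normalized function $\operatorname{pNrd}_{\lambda}$, defined on $\End^0_{\lambda}(A)$, correctly computes the Euler characteristics of classes living on $B$, and in particular that the degree factor $\sqrt{\deg \phi_{f^*\lambda}}$ absorbs both the normalization of $\operatorname{pNrd}_{\lambda}$ and the degree of $f$ consistently across all of the Wedderburn components.
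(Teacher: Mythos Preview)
Your proposal is correct and follows essentially the same route as the paper: assertion (i) is \cite[Theorem 4.1]{Grieve:R-R:abVars}, assertion (iii) is obtained from (ii) together with the Kempf--Ramanujam strong index theorem \cite[Appendix Theorem 2]{Mum:Quad:Eqns}, and the overall architecture matches. The one small difference is in (ii): the paper simply cites \cite[Theorem 4.4]{Grieve:R-R:abVars}, whereas you derive (ii) directly from (i) by applying it to the pencil $N f^*\lambda + D$ and observing that the positive constant $\sqrt{\deg\phi_{f^*\lambda}}$ does not affect the roots. Your derivation is self-contained and arguably cleaner, since it makes explicit that (ii) is an immediate corollary of (i) and the definition of the index, rather than an independent input; the paper's citation packages the same computation.
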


In this article, we define the \emph{index} $\operatorname{i}(\mathcal{E})$, 
for a semihomogeneous vector bundle $\mathcal{E}$ on $A$, to be the number of positive roots, counted with multiplicity, of any, and, by Lemma \ref{Index:Well:Defined:Lemma}, all of its Hilbert polynomials.  Again, this concept of index differs, in general, from the concept of index that is defined in \cite[Section 2.1]{Grieve-cup-prod-ab-var}.

Theorem \ref{R-R-index-lb} has the following consequence for the index of simple semihomogeneous vector bundles.

\begin{corollary}\label{Semi:homog:RR}
In the setting of Theorem \ref{R-R-index-lb}, suppose that 
$$
\alpha \in \operatorname{End}^0_\lambda(A)
$$ 
is the image of 
$$
[\operatorname{det}(\mathcal{E})] \in \operatorname{End}^0_{f^*\lambda}(B) \text{,}
$$
for $\mathcal{E}$ a simple semihomogeneous vector bundle on $B$.  Then,  $\operatorname{i}(\mathcal{E})$,  the index of $\mathcal{E}$, is equal to the number of positive roots counted with multiplicity of the polynomial
$$
\mathrm{p}_{\operatorname{det}(\mathcal{E}),f^*(\lambda)}(N) := \mathrm{pNrd}_{\lambda}(N \operatorname{id}_A + \alpha) \text{.}
$$
\end{corollary}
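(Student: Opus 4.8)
The plan is to reduce the computation of $\operatorname{i}(\mathcal{E})$ to a direct application of Theorem \ref{R-R-index-lb}(ii) for the integral class $\det(\mathcal{E})$, the bridge being the fact that the Hilbert polynomial of a simple semihomogeneous vector bundle is, up to a positive rank factor, the Hilbert polynomial of its slope class $\det(\mathcal{E})/\rank(\mathcal{E})$. Throughout I write $g = \dim B = \dim A$ and $r = \rank(\mathcal{E})$, and I compute indices with respect to the polarization $f^*\lambda$ on $B$, which is legitimate by Lemma \ref{Index:Well:Defined:Lemma}.

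First I would record the structure-theoretic input. For a simple semihomogeneous vector bundle $\mathcal{E}$ of rank $r$ on $B$, the Chern character is a pure exponential scaled by the rank, $\operatorname{ch}(\mathcal{E}) = r\,\exp\!\big(\det(\mathcal{E})/r\big)$. Because the Todd class of the Abelian variety $B$ is trivial, the Riemann--Roch computation for the twist $\mathcal{E}(N f^*\lambda)$ then yields
$$
\chi\big(\mathcal{E}(N f^*\lambda)\big) = r \cdot \frac{\big(N f^*\lambda + \det(\mathcal{E})/r\big)^g}{g!} = r\,\chi\!\big(N f^*\lambda + \det(\mathcal{E})/r\big),
$$
where on the right $\det(\mathcal{E})/r \in \NS_{\QQ}(B)$ is the slope class. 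Thus the Hilbert polynomial of $\mathcal{E}$ agrees, up to the positive factor $r$, with the Hilbert polynomial of the rational class $\det(\mathcal{E})/r$. Since multiplication by a positive scalar changes neither the roots nor their multiplicities, I would conclude $\operatorname{i}(\mathcal{E}) = \operatorname{i}\big(\det(\mathcal{E})/r\big)$.

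Next I would pass from the slope class to $\det(\mathcal{E})$ itself. Under $\NS_{\QQ}(B) \simeq \End^0_{f^*\lambda}(B)$ and the homomorphism induced by $f$, both of which are $\QQ$-linear, the class $\det(\mathcal{E})/r$ maps to $\alpha/r \in \End^0_\lambda(A)$, where $\alpha$ is the image of $[\det(\mathcal{E})]$. By Theorem \ref{R-R-index-lb}(ii), $\operatorname{i}\big(\det(\mathcal{E})/r\big)$ equals the number of positive roots of $\operatorname{pNrd}_\lambda(N\id_A + \alpha/r)$. Using that $\operatorname{pNrd}_\lambda(\cdot)$ is homogeneous of degree $g$, I get the identity $\operatorname{pNrd}_\lambda(N\id_A + \alpha/r) = r^{-g}\, p_{\det(\mathcal{E}),f^*\lambda}(rN)$, so the roots of the left-hand polynomial are exactly $1/r$ times those of $p_{\det(\mathcal{E}),f^*\lambda}(N) = \operatorname{pNrd}_\lambda(N\id_A + \alpha)$. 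As $1/r > 0$ this rescaling preserves the sign of each root and all multiplicities, whence the two polynomials have the same number of positive roots. Combining this with the previous paragraph gives $\operatorname{i}(\mathcal{E})$ equal to the number of positive roots of $p_{\det(\mathcal{E}),f^*\lambda}(N)$, as claimed.

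The main obstacle is the first step: establishing the precise Hilbert-polynomial identity for a simple semihomogeneous bundle, namely that it is the rank multiple of the Hilbert polynomial of its slope class. This rests on the structure theory of simple semihomogeneous bundles --- the purely exponential form of the Chern character --- together with the triviality of the Todd class on $B$. Once this is in hand, the remaining steps are a direct appeal to Theorem \ref{R-R-index-lb}(ii) and the elementary observation, made precise through the homogeneity of $\operatorname{pNrd}_\lambda$, that scaling the associated Rosati-symmetric endomorphism by the positive rational $1/r$ leaves invariant the number of positive roots of the normalized reduced-norm polynomial.
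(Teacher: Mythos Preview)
Your argument is correct. The paper's own proof is a one-liner: it invokes Theorem~\ref{semi:homog:index:thm} to assert $\operatorname{i}(\mathcal{E}) = \operatorname{i}(\det(\mathcal{E}))$ and then applies Theorem~\ref{R-R-index-lb}(ii) directly to the \emph{integral} class $\det(\mathcal{E})$. Your route is a close variant: you bypass Theorem~\ref{semi:homog:index:thm} and instead use the exponential Chern character (equivalently, Mukai's formula $\chi(\mathcal{E}) = \chi(\det(\mathcal{E}))/r^{g-1}$, which is exactly what underlies the relevant part of the proof of Theorem~\ref{semi:homog:index:thm}) to identify $\operatorname{i}(\mathcal{E})$ with the index of the \emph{rational} slope class $\det(\mathcal{E})/r$, apply Theorem~\ref{R-R-index-lb}(ii) there, and then use degree-$g$ homogeneity of $\operatorname{pNrd}_\lambda$ to pass from $\alpha/r$ to $\alpha$. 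The two arguments rest on the same structural input (Mukai's Euler-characteristic identity) and the same black box (Theorem~\ref{R-R-index-lb}); the only difference is whether the positive rescaling by $r$ is absorbed before or after invoking Theorem~\ref{R-R-index-lb}. Your version is slightly more self-contained in that it does not cite Theorem~\ref{semi:homog:index:thm}, at the cost of spelling out the homogeneity step explicitly.
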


As an additional application of Theorem \ref{R-R-index-lb}, here we use it to build on \cite[Theorem A]{Kuronya:Mustopa:2020}.  In doing so, we establish the following result.

\begin{theorem}\label{reduced:norm:cont:reg}  Suppose that 
$$A = A_1^{r_1}\times \dots \times A_k^{r_k}$$ is an Abelian variety with $A_i$ simple and pairwise nonisogenous Abelian varieties.  Fix an ample divisor class $\lambda$ on $A$.
Let 
$$f \colon B \rightarrow A$$ 
be an isogeny and  let 
$$
\operatorname{pNrd}_{\lambda}(\cdot) \colon \operatorname{End}_\lambda^0(A) \rightarrow \QQ
$$
be the normalized polynomial function that is given by Theorem \ref{R-R-index-lb} (see Equation \eqref{normalized:polynomial:function}).  Fix a rational divisor class, in $\NS_{\QQ}(B)$, corresponding to 
$$
\gamma \in \End^0_{f^*\lambda}(B) \text{,}
$$
{under the identification
$$\NS_{\QQ}(B) \simeq \End^0_{f^*\lambda}(B)\text{.}$$
 }
Let $\alpha$ be its image in $\End^0_\lambda(A)$ under the natural map
$$
\Phi_{f^*(\lambda)}(\cdot) \colon \End^0_{f^*\lambda}(B) \rightarrow \End^0_{\lambda}(A) \text{.}
$$
Suppose that $m \in \ZZ$ is the smallest integer for which 
$$
\operatorname{pNrd}_\lambda((m-i)\id_A + \alpha) = 0
$$
or for which the polynomial
$$
\operatorname{pNrd}_\lambda((N+m-i)\id_A + \alpha)
$$
fails to have $i$ positive roots (counted with multiplicities) for all 
$$i \in \{1,\dots,g\}\text{.}$$  
Finally, 
let $\mathcal{E}$ be a semihomogeneous vector bundle on $B$ and assume that the class 
$$
\frac{\det(\mathcal{E})}{\operatorname{rank}(\mathcal{E})} \in \NS_{\QQ}(B) 
$$
is identified with $\gamma$.
Then $\operatorname{reg}_{\operatorname{cont}}(\mathcal{E},f^*\lambda)$, the \emph{continuous Castelnuovo-Mumford regularity} of $\mathcal{E}$ with respect to the polarization $f^*\lambda$, is equal to $m$.
\end{theorem}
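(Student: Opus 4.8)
The plan is to reduce the statement to a cohomological dichotomy for the support loci $\operatorname{V}^i(B, \mathcal{E}(m-i))$ and then translate that dichotomy, via Corollary \ref{Semi:homog:RR} and Theorem \ref{R-R-index-lb}, into the two $\operatorname{pNrd}_\lambda$ conditions that define $m$. By the definition of continuous Castelnuovo-Mumford regularity, $\operatorname{reg}_{\operatorname{cont}}(\mathcal{E}, f^*\lambda)$ is the smallest integer $m$ for which $\operatorname{V}^i(B, \mathcal{E}(m-i))$ is a proper Zariski closed subset of $\operatorname{Pic}^0(B)$ for every $i > 0$; since $\dim B = \dim A = g$, only the indices $i \in \{1, \dots, g\}$ are relevant. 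I would therefore fix an integer $m$ and show that, for each such $i$, the locus $\operatorname{V}^i(B, \mathcal{E}(m-i))$ is proper if and only if the $i$-th clause in the definition of $m$ holds. Because both $m$ and $\operatorname{reg}_{\operatorname{cont}}(\mathcal{E}, f^*\lambda)$ are characterized as the least integer satisfying the respective (equivalent) conditions over all $i$, the two agree.

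The heart of the argument is the following dichotomy for a semihomogeneous bundle $\mathcal{G} = \mathcal{E}(m-i)$, which I would establish from Mumford's structure theory for semihomogeneous bundles together with the index theorem. First, $\mathcal{G}$ is again semihomogeneous, and its normalized determinant class $\det(\mathcal{G})/\operatorname{rank}(\mathcal{G})$ equals $\gamma + (m-i)[f^*\lambda]$, which corresponds to $\alpha + (m-i)\operatorname{id}_A$ in $\End^0_\lambda(A)$ under $\Phi_{f^*\lambda}$, since $[f^*\lambda]$ maps to $\operatorname{id}_A$. When $\mathcal{G}$ is nondegenerate, i.e. $\chi(\mathcal{G}) \neq 0$, all of its cohomology concentrates in the single degree $\operatorname{i}(\mathcal{G})$ for every $P \in \operatorname{Pic}^0(B)$; hence $\operatorname{V}^{\operatorname{i}(\mathcal{G})}(B, \mathcal{G}) = \operatorname{Pic}^0(B)$ while $\operatorname{V}^j(B, \mathcal{G}) = \emptyset$ for $j \neq \operatorname{i}(\mathcal{G})$, so $\operatorname{V}^i$ is proper precisely when $i \neq \operatorname{i}(\mathcal{G})$. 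When $\mathcal{G}$ is degenerate, i.e. $\chi(\mathcal{G}) = 0$, the connected component of $\ker(\phi_{\det \mathcal{G}})$ is a positive-dimensional abelian subvariety $B_0 \subseteq B$, and a K\"{u}nneth/descent computation along $B_0$ shows $\H^j(B, \mathcal{G}\otimes P) = 0$ for all $j$ whenever $P|_{B_0}$ is nontrivial; consequently every $\operatorname{V}^j(B, \mathcal{G})$ is contained in the proper subvariety $\{P : P|_{B_0} \cong \Osh_{B_0}\}$ and is itself proper.

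With this dichotomy in hand, I would invoke Corollary \ref{Semi:homog:RR}, which identifies $\operatorname{i}(\mathcal{G})$ with the number of positive roots, counted with multiplicity, of $\operatorname{pNrd}_\lambda(N\operatorname{id}_A + (\alpha + (m-i)\operatorname{id}_A)) = \operatorname{pNrd}_\lambda((N + m - i)\operatorname{id}_A + \alpha)$, together with Theorem \ref{R-R-index-lb}(i), which gives $\chi(\mathcal{G})$ as a nonzero multiple of $\operatorname{pNrd}_\lambda((m-i)\operatorname{id}_A + \alpha)$, so that degeneracy of $\mathcal{G}$ is equivalent to the vanishing of this value. Thus $\operatorname{V}^i(B, \mathcal{E}(m-i))$ fails to be proper exactly when $\mathcal{E}(m-i)$ is nondegenerate with index $i$, that is, exactly when $\operatorname{pNrd}_\lambda((m-i)\operatorname{id}_A + \alpha) \neq 0$ and $\operatorname{pNrd}_\lambda((N + m - i)\operatorname{id}_A + \alpha)$ has exactly $i$ positive roots. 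Negating, properness of $\operatorname{V}^i$ is equivalent to the $i$-th defining clause of $m$, which completes the identification.

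I expect the main obstacle to be the cohomological dichotomy, and in particular the degenerate case: verifying that $\chi(\mathcal{E}(m-i)) = 0$ forces every support locus to be proper requires reducing the cohomology of a general (not necessarily simple) semihomogeneous bundle to the line-bundle situation along the connected kernel $B_0$, which rests on the structure theory of semihomogeneous bundles and on compatibility of the index with pullback along the isogeny $f$. A secondary technical point is the bookkeeping that carries the numerical class $\gamma + (m-i)[f^*\lambda]$ on $B$ to $\alpha + (m-i)\operatorname{id}_A$ on $A$ consistently with the normalizations in Theorem \ref{R-R-index-lb} and Corollary \ref{Semi:homog:RR}; here the fact, from Theorem \ref{Kuronya:Mustopa:ThmA}(i), that $\operatorname{reg}_{\operatorname{cont}}$ depends only on $\det(\mathcal{E})/\operatorname{rank}(\mathcal{E})$ lets me replace $\mathcal{E}$ by any convenient semihomogeneous representative of the class $\gamma$ when carrying out these reductions.
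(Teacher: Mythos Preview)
Your argument is correct, but it is organized differently from the paper's. The paper does not re-derive the cohomological dichotomy for $\mathcal{E}(m-i)$; instead it invokes Theorem~\ref{Kuronya:Mustopa:ThmA}(i) (K\"{u}ronya--Mustopa) directly, both to reduce to the simple case via the decomposition $\mathcal{E}\simeq\bigoplus_j \mathcal{E}_j\otimes\mathcal{U}_j$ and to assert that $\operatorname{reg}_{\mathrm{cont}}(\mathcal{E},f^*\lambda)=\rho_\eta(\gamma)$, where $\rho_\eta(\gamma)$ is already characterized (in the proof of that theorem) as the least $m$ for which the class $\gamma+(m-i)\eta$ is degenerate or has index $\neq i$ for every $i\in\{1,\dots,g\}$. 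The remaining step in the paper is then purely a translation of those numerical conditions into the $\operatorname{pNrd}_\lambda$ conditions via Theorem~\ref{R-R-index-lb}. Your route instead unpacks what underlies Theorem~\ref{Kuronya:Mustopa:ThmA}: you establish the nondegenerate/degenerate dichotomy for the support loci by hand (the nondegenerate case being the Index Theorem property of Theorem~\ref{semi:homog:index:thm}(iii), the degenerate case being the kernel argument along $B_0$, which is essentially Proposition~\ref{line:bundle:index} pushed through an isogeny). This buys you a more self-contained argument that makes the cohomological mechanism visible, at the cost of reproving material that the paper simply cites.

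One organizational point: since Corollary~\ref{Semi:homog:RR}, Definition~\ref{semi:homog:index:well:defined}, and Theorem~\ref{semi:homog:index:thm}(iii) are all stated for \emph{simple} semihomogeneous bundles, you should carry out the reduction to the simple case at the outset rather than deferring it to a ``secondary technical point.'' The paper does this explicitly via the $\bigoplus_j \mathcal{E}_j\otimes\mathcal{U}_j$ decomposition together with Theorem~\ref{Kuronya:Mustopa:ThmA}(i); your proposed use of that same theorem to replace $\mathcal{E}$ by a simple representative of the slope $\gamma$ accomplishes the same thing and should be stated up front so that $\operatorname{i}(\mathcal{G})$ and the Index Theorem property are available when you invoke them.
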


We prove Theorem \ref{reduced:norm:cont:reg} in Section \ref{proof:reduced:norm:cont:reg}. It is established by first reducing to the case of simple semihomogeneous vector bundles.  In Sections \ref{semihomog:index:thm} and \ref{GV:theory:prelim}, we explain how these matters relate to the Index and Generic Vanishing Theorems for simple semihomogeneous vector bundles.  

In doing so, we complement related results of Gulbrandsen, \cite[Propositions 5.1 and 6.3]{Gulbrandsen:2008}, and Kempf, \cite[Appendix]{Mum:Quad:Eqns}.  In Section \ref{Abelian:variety:notation}, we establish our notation for Abelian varieties and, in Section  \ref{Endomorphism:algebra:notation}, we fix our notation for endomorphism algebras and the Riemann-Roch Theorem. 

Our results here, together with those of our earlier work \cite{Grieve:R-R:abVars}, indicate, in a precise way the manner in which the Wedderburn decomposition of a given Abelian variety is reflected in cohomological and global generation properties of (higher rank) semihomogeneous vector bundles.  This picture expands upon earlier work of Mumford \cite{Mum:v1}.   

As one direction for future investigation, it remains an interesting problem to have an explicit knowledge of the normalized polynomials $\operatorname{pNrd}_{\lambda}(\cdot)$, which arise in the statement of Theorem \ref{reduced:norm:cont:reg}, for a wide class of Abelian varieties. For that it is necessary to have a detailed understanding of the endomorphism algebra together with the Rosati involution.  

To help place matters into perspective, note that already it is an interesting question to determine which integers can be realized as Picard numbers of Abelian varieties.  We refer to the article \cite{Hulek:Laface:2019}, as one more recent work in that direction.

\subsection*{Acknowledgements} This work benefited from the Banff International Research Station workshops 19w5164 \emph{Interactions between Brauer Groups, Derived Categories and Birational Geometry of Projective Varieties} and 20w5176 \emph{Derived, Birational, and Categorical Algebraic Geometry (Online)}, in addition to several other virtual seminars and conferences during the Spring, Summer and Fall 2020 semesters.  I thank colleagues for their interest, inspiration and lectures.  Further, I thank the Natural Sciences and Engineering Research Council of Canada for their support through my grants DGECR-2021-00218 and RGPIN-2021-03821. Finally, I thank anonymous referees for their careful reading of this article and for offering several helpful comments and suggestions.   

\section{Notation for Abelian varieties}\label{Abelian:variety:notation}

In what follows, we work over a fixed algebraically closed base field $\kk$.  If $A$ is an Abelian variety, then $\hat{A}$ denotes the dual Abelian variety and $\mathcal{P}$ denotes the normalized Poincar\'{e} line bundle on 
$A \times \hat{A}\text{.}$  If 
$$x \in A\text{,}$$ 
then 
$$\tau_x \colon A \rightarrow A$$ 
denotes translation by $x$ in the group law.   

If 
$$\hat{x} \in \hat{A}\text{,}$$ 
then $\mathcal{P}_{\hat{x}}$ denotes the translation invariant line bundle on $A$ that is determined by $\hat{x}$.  

Multiplication in the group law is denoted as 
$$m \colon A \times A \rightarrow A\text{.}$$  
The projections of $A \times A$ onto the first and second factors, respectively, are denoted by $p_1$ and $p_2$.
  On the other hand, the projections of $A \times \hat{A}$ onto the first and second factors, respectively, are denoted by $p_A$ and $p_{\hat{A}}$. 

The endomorphism algebra of $A$ is denoted by $\operatorname{End}(A)$.  We also put 
$$\operatorname{End}^0(A) := \operatorname{End}(A) \otimes_{\ZZ} \QQ\text{.}$$  
We let $\operatorname{NS}(A)$ denote the N\'{e}ron-Severi group of $A$ and put
$$
\operatorname{NS}_{\QQ}(A) := \operatorname{NS}(A)\otimes_{\ZZ}\QQ \text{.}
$$
When no confusion is likely, at times we use the same notation to denote the class, in $\NS(A)$, of a divisor $D$ on $A$.  By a rational divisor class, we mean an element of $\NS_\QQ(A)$.  

Moreover, by the \emph{Euler characteristic} of a rational divisor class 
$$D \in \NS_{\QQ}(A)\text{,}$$
is meant the quantity 
$$
\chi(D) = (D^g)/g! \text{.}
$$ 
Here, $g$ is the dimension of $A$ and $(D^g)$ is $D$'s $g$-fold self-intersection number.

If 
$$\alpha \in \End^0(A)\text{,}$$ 
then 
$$\hat{\alpha} \in \End^0(\hat{A})$$ 
is its dual.  If $D$ is a divisor on $A$, then 
$$\phi_D \colon A \rightarrow \hat{A}$$ 
is the homomorphism that is defined by
$$x \mapsto \tau_x^*\Osh_A(D) \otimes \Osh_A(-D) \text{.}$$

We refer to \cite{Mum:v1} for more details about Abelian varieties.  Much of the most basic theory is summarized in \cite[Section 2]{Grieve:R-R:abVars}.

\section{Albert algebras}\label{endomorphism:algebras}

Recall, that an \emph{Albert algebra} consists of a division algebra $\Delta$, of finite dimension over $\QQ$, together with an involution 
$$' \colon \Delta \rightarrow \Delta\text{,}$$ 
which we denote as
$$\alpha \mapsto \alpha'\text{,}$$
and 
which is \emph{positive} in the sense that 
$$\operatorname{Trd}_{\Delta / \QQ}(\alpha \alpha') > 0$$ 
for all 
$$0 \not = \alpha \in \Delta\text{.}$$  
Here, 
$$\operatorname{Trd}_{\Delta / \QQ}(\cdot) \colon \Delta \rightarrow \QQ
$$ denotes the reduced trace from $\Delta$ to $\QQ$.  Similarly, $$\operatorname{Nrd}_{\Delta / \QQ}(\cdot)\colon \Delta \rightarrow \QQ$$ 
denotes the reduced norm in what follows.

If $A$ is a simple Abelian variety, then each ample divisor class $\lambda$ induces a positive involution  $r_\lambda$ on the division algebra $\operatorname{End}^0(A)$.  Especially, for each fixed polarization, the pair $(\operatorname{End}^0(A),r_{\lambda})$ is an Albert algebra.  We recall the definition of the involution $r_\lambda$ in Section \ref{Endomorphism:algebra:notation}.

We refer to \cite[Theorem 5.1]{Grieve:R-R:abVars}, and the references therein, for a statement and more detailed discussion of Albert's theorem.  Several examples of Albert's theorem, as it pertains to endomorphism algebras and Rosati involutions for isotypic Abelian varieties and products thereof, are given in \cite[Sections 6 and 7]{Grieve:R-R:abVars}. 

\section{Notation for endomorphism algebras}\label{Endomorphism:algebra:notation}

For later use, we fix notation for endomorphism algebras of Abelian varieties.  We follow the conventions of \cite{Grieve:R-R:abVars} closely.  First of all, consider the case of an Abelian variety $A$ of the form
$$
A := A_1^{r_1} \times \dots \times A_k^{r_k} \text{,}
$$
where each of the $A_i$ are simple and pairwise nonisogeneous Abelian varieties.  

In terms of the endomorphism algebra
$$
R := \End^0(A) \text{,}
$$
we may write
$$
R_i := M_{r_i}(\Delta_i) \text{.}
$$
Let $Z_i$ be the centre of $\Delta_i$ and let
$$
\Nrd_{R_i / \QQ}(\cdot) \colon R_i \rightarrow \QQ
$$
be the reduced norm.

Put
$$
g_i := \dim A_i \text{, } m_i^2 := \dim_{Z_i} \Delta_i \text{ and } t_i := [Z_i : \QQ] \text{, }
$$
for $i = 1,\dots, k$.

Fix an ample divisor $\lambda$ on $A$.  Recall, that the \emph{Rosati involution}
$$
r_\lambda \colon R \rightarrow R
$$
is defined by the condition that
$$
\alpha \mapsto \phi_{\lambda}^{-1} \circ \hat{\alpha} \circ \phi_\lambda \text{.}
$$
Moreover, the map
$$
D \mapsto \phi^{-1}_{\lambda} \circ \phi_D
$$
induces an isomorphism
$$
\Phi_\lambda \colon \NS_{\QQ}(A) \xrightarrow{\sim} \End^0_\lambda(A) \text{.}
$$
Here, $\phi_\lambda^{-1}$ is the inverse of $\phi_\lambda$ and
$$
\End^0_\lambda(A) := \{\alpha \in \End^0(A) : \alpha = r_{\lambda}(\alpha) \} \text{.}
$$

Within the present context, as noted in \cite[Corollary 3.7]{Grieve:R-R:abVars}, the function
$$
\prod_{i=1}^k \Nrd_{R_i / \QQ}(\cdot)^{2g_i / (t_i m_i) }|_{\End_\lambda^0(A)} \colon \End^0_{\lambda}(A) \rightarrow \QQ
$$
is the square of a rational valued homogeneous polynomial function of degree $g$ on $\End^0_{\lambda}(A)$.  
It is normalized so as to take value $1$ on 
$$\operatorname{id}_A = 1_R$$ and is denoted by $\operatorname{pNrd}_{\lambda}(\cdot)$.

In particular, if $D$ is a divisor on $A$, then, as in \cite[Theorem 4.1 (a)]{Grieve:R-R:abVars}, the Riemann-Roch Theorem may be expressed as
$$
\chi(D) = \frac{(D^g)}{g!} = \sqrt{\deg \phi_\lambda} \operatorname{pNrd}_{\lambda}(\Phi_\lambda(D)) \text{.}
$$

More generally, let 
$$f \colon B \rightarrow A$$ 
be an isogeny and $D$ a divisor on $B$.  Let 
$$\alpha \in \End^0_\lambda(A)$$ 
correspond to $D$ under the induced map
$$
\End^0_{f^* \lambda}(B) \rightarrow \End^0_\lambda(A) \text{.}
$$
Then, as in \cite[Theorem 4.1 (b)]{Grieve:R-R:abVars}, the Riemann-Roch theorem for divisors $D$ on $B$ can be expressed as
\begin{equation}
\chi(D) = \frac{(D^g)}{g!} = \sqrt{\deg \phi_{\beta^* \lambda}} \operatorname{pNrd}_{\lambda}(\alpha) \text{.}
\end{equation}

\section{Semihomogeneous vector bundles and the index theorem}\label{semihomog:index:thm}

The concept of \emph{semihomogeneous vector bundle} over a given Abelian variety $A$ is due to Mukai \cite{Muk78}, building on earlier work of Mumford \cite{Mum:v1} and Atiyah \cite{Atiyah}.  Specifically, a vector bundle $\mathcal{E}$ over $A$ is called \emph{semihomogeneous} if for all 
$x \in A\text{,}$
$$\tau^*_x \mathcal{E} \simeq \mathcal{E} \otimes \mathcal{L}$$ 
for some line bundle $\mathcal{L}$ over $A$.

A vector bundle $\mathcal{E}$ on $A$ is called \emph{simple} if 
$$\operatorname{End}_{\Osh_A}(\mathcal{E}) \simeq \kk \text{.}$$
The \emph{simple semihomogeneous} vector bundles over $A$ were characterized by Mukai \cite[Theorem 5.8]{Muk78}.    In particular, for a simple vector bundle $\mathcal{E}$ on $A$, the following four conditions are equivalent

\begin{itemize}
\item{
$\dim_{\kk} \H^1(A,\mathcal{E}nd_{\Osh_A}(\mathcal{E})) = g$;
}
\item{
$\mathcal{E}$ is semihomogenous;
}
\item{
$\mathcal{E}nd_{\Osh_A}(\mathcal{E})$ is a homogeneous vector bundle i.e., 
$$\tau^*_x \mathcal{E}nd_{\Osh_A}(\mathcal{E}) \simeq \mathcal{E}nd_{\Osh_A}(\mathcal{E})\text{,}$$ for all $x \in A$; and
}
\item{
there exists an isogeny 
$$f \colon B \rightarrow A$$ 
and a line bundle $\mathcal{L}$ on $B$ so that 
$$\mathcal{E} \simeq f_*(\mathcal{L})\text{.}$$
}
\end{itemize}

We refer to \cite{Grieve:theta} for the theory of theta groups that are associated to simple semihomogeneous vector bundles and to \cite{Grieve:biextensions}, building on work of Brion \cite{Bri}, for the main results about homogeneous Severi-Brauer varieties.

For a simple vector bundle $\mathcal{E}$ over $A$, let $\K(\mathcal{E})$ be the maximal subscheme of $A$ over which $m^*(\mathcal{E})$ and $p_{A}^*(\mathcal{E})$ are isomorphic \cite[Definition 3.8]{Muk78}.  
Moreover, recall that if $\mathcal{E}$ is a rank $r$ semihomogeneous vector bundle over $A$, then
$$
\chi(\mathcal{E}) = \frac{\chi(\operatorname{det}(\mathcal{E}))}{r^{g-1}} \text{,}
$$
\cite[Proposition 6.12]{Muk78}.

Further, if $\mathcal{E}
$ is a simple semihomogeneous vector bundle, then 
\begin{itemize}
\item{$\dim \mathrm{K}(\mathcal{E}) = \dim \mathrm{K}(\operatorname{det}(\mathcal{E}))$; and}
\item{
if $\chi(\mathcal{E}) \not = 0$, then $\operatorname{ord}(\mathrm{K}(\mathcal{E})) = \chi(\mathcal{E})^2$;
}
\end{itemize}
see  \cite[Corollary 7.9]{Muk78}.

In what follows, we say that a vector bundle $\mathcal{E}$ on $A$ is \emph{nondegenerate} if its \emph{Euler characteristic}
$$
\chi(\mathcal{E}) := \sum_{i=0}^g (-1)^i \dim_{\kk} \H^i(A,\mathcal{E})
$$
is nonzero.  If 
$$\chi(\mathcal{E}) = 0\text{,}$$ 
then $\mathcal{E}$ is called \emph{degenerate}.   

Let $\mathcal{E}$ be a vector bundle on $A$.
Fixing an ample line bundle $\Osh(1)$ on $A$, consider the Hilbert polynomial 
\begin{equation}\label{vb:hilb:poly}
\operatorname{HilbPoly}_{\Osh(1)}(\mathcal{E}) := \chi(\mathcal{E}(N)) \text{.}
\end{equation}

In Definition \ref{semi:homog:index:well:defined} below, we use Hilbert polynomials to define a concept of index for simple semihomogeneous vector bundles.  The following lemma implies that this definition is indeed well-defined.  We include a proof for completeness.

\begin{lemma}\label{Index:Well:Defined:Lemma}
Assume that $\mathcal{E}$ is a simple semihomogeneous vector bundle on $A$.  Then, the roots of the polynomial \eqref{vb:hilb:poly} are real, for all ample line bundles $\Osh(1)$ on $A$.  Moreover, the number of positive roots, counted with multiplicity, of those polynomials is well-defined.  In particular, it is independent of the choice of ample line bundle.
\end{lemma}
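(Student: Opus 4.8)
The plan is to reduce the assertion for the vector bundle $\mathcal{E}$ to the corresponding assertion for the line bundle $\det(\mathcal{E})$, and then to invoke the index theorem for line bundles on Abelian varieties. First I would observe that for each integer $N$ the twist $\mathcal{E}(N) = \mathcal{E} \otimes \Osh(N)$ is again a semihomogeneous vector bundle of rank $r := \operatorname{rank}(\mathcal{E})$, with
$$
\det(\mathcal{E}(N)) = \det(\mathcal{E}) \otimes \Osh(N)^{\otimes r} \text{,}
$$
so that its class in $\NS(A)$ equals $\det(\mathcal{E}) + rN\lambda$, where $\lambda$ denotes the class of $\Osh(1)$. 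Applying Mukai's Euler characteristic formula \cite[Proposition 6.12]{Muk78} to $\mathcal{E}(N)$ then gives, for every integer $N$,
$$
\chi(\mathcal{E}(N)) = \frac{\chi(\det(\mathcal{E}) + rN\lambda)}{r^{g-1}} = \frac{1}{r^{g-1}} \cdot \frac{(\det(\mathcal{E}) + rN\lambda)^g}{g!} \text{.}
$$
Since two polynomials agreeing at all integers coincide, this is an identity of polynomials in $N$. Thus the Hilbert polynomial \eqref{vb:hilb:poly} of $\mathcal{E}$ is, up to the positive constant $r^{-(g-1)}$ and the substitution $N \mapsto rN$, precisely the Hilbert polynomial $M \mapsto \chi(\det(\mathcal{E}) + M\lambda)$ of the divisor class $\det(\mathcal{E})$, in the sense of \eqref{Hilb:poly}.

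Next I would transfer both conclusions across this identity. For the realness of the roots, the index theorem for line bundles—which underlies Theorem \ref{R-R-index-lb} and is recorded in \cite[p.~156]{Mum:v1} and \cite[Appendix Theorem 2]{Mum:Quad:Eqns}—asserts that the polynomial $M \mapsto (\det(\mathcal{E}) + M\lambda)^g$ has only real roots. As $r > 0$, its roots $M_j$ in $M$ correspond bijectively to the roots $M_j/r$ in $N$ of \eqref{vb:hilb:poly}, which are therefore also real, for the given ample $\Osh(1)$. For the count of positive roots, the same scaling by the positive factor $r$ preserves sign and multiplicity, so the number of positive roots of \eqref{vb:hilb:poly} equals the number of positive roots of $M \mapsto \chi(\det(\mathcal{E}) + M\lambda)$, namely the index $\operatorname{i}(\det(\mathcal{E}))$ of the divisor class $\det(\mathcal{E})$.

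Finally, the independence of the choice of $\Osh(1)$ follows because $\operatorname{i}(\det(\mathcal{E}))$ is already known to be independent of the polarizing ample class: this is the well-definedness of the index for rational divisor classes, as discussed after \eqref{Hilb:poly} and established through Mumford's index theorem. Hence the number of positive roots of \eqref{vb:hilb:poly} does not depend on $\Osh(1)$, as claimed. The step requiring most care is the reduction itself: one must check that Mukai's formula applies verbatim to every twist $\mathcal{E}(N)$ and produces an identity of polynomials, after which realness and the invariance of the positive-root count are inherited from the classical line bundle theory purely through the positivity of $r$. I expect the chief subtlety to lie not in any hard estimate but in cleanly isolating $\operatorname{i}(\det(\mathcal{E}))$ as the common invariant and in correctly tracking the effect of the substitution $N \mapsto rN$ on the sign and multiplicity of roots—in particular in the degenerate case $\chi(\mathcal{E}) = 0$, where $0$ occurs as a root of \eqref{vb:hilb:poly} and must be excluded from the positive count, a case nonetheless subsumed by the cited well-definedness of the divisor-class index.
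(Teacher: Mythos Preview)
Your proof is correct, but it follows a different reduction than the paper's. The paper exploits the characterization of simple semihomogeneous bundles as isogeny pushforwards: it writes $\mathcal{E}\simeq f_*(\mathcal{L})$ for an isogeny $f\colon B\to A$ and a line bundle $\mathcal{L}$ on $B$, then uses the projection formula and the vanishing of higher direct images to obtain $\chi(A,\mathcal{E}(N))=\chi(B,\mathcal{L}\otimes f^*\Osh_A(N))$, and finally invokes the line bundle index theorem on $B$. Your approach instead stays on $A$ and reduces to $\det(\mathcal{E})$ via Mukai's numerical identity $\chi(\mathcal{E}(N))=\chi(\det(\mathcal{E})+rN\lambda)/r^{g-1}$, after which the substitution $M=rN$ transports realness and the positive-root count directly from $\operatorname{i}(\det(\mathcal{E}))$. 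Your argument is more elementary---it avoids the Leray spectral sequence and the isogeny description entirely---and identifies the invariant as $\operatorname{i}(\det(\mathcal{E}))$ in one stroke; the paper's route, by contrast, sets up the pushforward framework that is reused in the proof of Theorem~\ref{semi:homog:index:thm}. It is worth noting that the paper itself employs exactly your identity $\chi(\mathcal{E}(N))=\chi(\det(\mathcal{E}(N)))/r^{g-1}$ later, in the proof of Theorem~\ref{semi:homog:index:thm}~(i), so your reduction is entirely in keeping with the tools at hand.
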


\begin{proof}
By assumption, $\mathcal{E}$ is a simple semihomogeneous vector bundle.  In particular, there exists an isogeny
$$
f \colon B \rightarrow A
$$
together with a line bundle $\mathcal{L}$ on $B$, which has the property that
$$
\mathcal{E} \simeq f_*(\mathcal{L}) \text{.}
$$
Fix an ample line bundle $\Osh_A(1)$ on $A$ and put
$$
\Osh_B(1) := f^* \Osh_A(1)  \text{.}
$$
Then, $\Osh_B(1)$ is an ample line bundle on $B$.  Moreover, if $N \in \ZZ$, then the projection formula implies that
\begin{align*}
\mathcal{E}(N) & := \mathcal{E} \otimes \Osh_A(N) \\
& \simeq f_* (\mathcal{L} \otimes \Osh_B(N)) \text{.}
\end{align*}

Further, since $f$ is an isogeny
$$
\R^i f_*\left( \mathcal{L} \otimes \Osh_B(N) \right) = 0 \text{,}
$$
for $i > 0$.  In particular, the Leray spectral sequence implies equality of Euler  characteristics
$$
\chi(A,f_*(\mathcal{L} \otimes \Osh_B(N)) = \chi(B,\mathcal{L} \otimes \Osh_B(N)) \text{.}
$$
Hence
\begin{equation}\label{index:well:defined:relation}
\chi(A,\mathcal{E}(N)) = \chi(B,\mathcal{L} \otimes \Osh_B(N)) \text{.}
\end{equation}

Now,  $\mathcal{L}$ is a line bundle on $B$ and it is known,  by \cite[Appendix Theorem 2]{Mum:Quad:Eqns}, for example, that the roots of all of the Hilbert polynomials of $\mathcal{L}$ are real and that the number of positive roots, counted  with multiplicities of such polynomials is independent of the choice of ample line bundle on $B$.

Finally, since  each ample line bundle on $A$ pulls back to an ample line bundle on $B$, the conclusion of the lemma follows from the above relation \eqref{index:well:defined:relation}.
\end{proof}

\begin{defn}\label{semi:homog:index:well:defined}
If $\mathcal{E}$ is a simple semihomogeneous vector bundle on $A$, then let $\operatorname{i}(\mathcal{E})$ be the number of positive (real) roots of any, and hence all, of the Hilbert polynomials \eqref{vb:hilb:poly}.
Define $\operatorname{i}(\mathcal{E})$ to be the number of positive real roots counted with multiplicity.  In what follows, we say that $\operatorname{i}(\mathcal{E})$ is the \emph{index} of $\mathcal{E}$.
\end{defn}

\begin{remark}
Note that the concept of index $\operatorname{i}(\mathcal{E})$, for a given semihomogeneous vector bundle $\mathcal{E}$, as we have defined here, differs, in general, from the concept of index that is defined in \cite[Section 2.1]{Grieve-cup-prod-ab-var}.  But, for the case of nondegenerate semisimple vector bundles, for example, these concepts of index are equivalent.
\end{remark}

In Proposition \ref{line:bundle:index} below, we collect, from \cite[Appendix]{Mum:Quad:Eqns} and \cite[Proposition 5.1]{Gulbrandsen:2008}, a number of cohomological properties of line bundles.  They are used in our proof of Theorem \ref{semi:homog:index:thm}.  
We illustrate the conclusion of Proposition \ref{line:bundle:index}, in Example \ref{weak:index:E:E:example}.  

\begin{proposition}\label{line:bundle:index}  
Let $\mathcal{L}$ be a line bundle on a $g$-dimensional Abelian variety $A$.  Let $\K^0(\mathcal{L})$ be the identity component, with the reduced subscheme structure, of the group scheme $\K(\mathcal{L})$.  Let 
$$f \colon A \rightarrow A / \K^0(\mathcal{L})$$ 
be the canonical homomorphism.  Then, the following assertions hold true.
\begin{enumerate}
\item[(i)]{
If $\mathcal{L}|_{\K^0(\mathcal{L})}$ is nontrivial, then
$$
\H^\ell(A,\mathcal{L}) = 0 \text{,}
$$
for all $\ell = 0,\dots,g$.  Further
$$
\R^\ell {p_{\hat{A}}}_* \left( p_A^* \mathcal{L} \otimes \mathcal{P} \right) = 0 \text{,}
$$
for all $\ell = 0,\dots,g$.
}
\item[(ii)]{
If $\mathcal{L}|_{\K^0(\mathcal{L})}$ is trivial, then
$$
\H^\ell(A,\mathcal{L}) \not = 0
$$
if and only if
$$
\ell \in [\operatorname{i}(\mathcal{L}), \operatorname{i}(\mathcal{L}) + \dim \K(\mathcal{L}) ] \text{.}
$$
Moreover,
$$
\R^\ell {p_{\hat{A}}}_* \left(p_A^* \mathcal{L} \otimes \mathcal{P} \right) = 0
$$
for 
$$
\ell \not = \dim \K(\mathcal{L}) + \operatorname{i}(\mathcal{L}) \text{.}
$$
}
\end{enumerate}
\end{proposition}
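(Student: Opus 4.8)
The plan is to reduce everything to the classical index theorem for \emph{nondegenerate} line bundles, recorded by Mumford \cite[Appendix]{Mum:Quad:Eqns} and Gulbrandsen \cite[Proposition 5.1]{Gulbrandsen:2008}, by exploiting the quotient attached to $\K^0(\mathcal{L})$. Set $K := \K^0(\mathcal{L})$, an Abelian subvariety of $A$ with $\dim K = \dim \K(\mathcal{L})$, and let $f \colon A \to B := A / K$ be the canonical homomorphism, so $\dim B = g - \dim \K(\mathcal{L})$. Since $K \subseteq \K(\mathcal{L}) = \ker \phi_{\mathcal{L}}$, translation by points of $K$ preserves $\mathcal{L}$; restricting this to $K$ shows that $\mathcal{L}|_{K}$ is translation invariant, i.e. $\mathcal{L}|_{K} \in \Pic^0(K)$. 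The two alternatives of the Proposition are precisely whether this class is nontrivial or trivial. Two standard inputs will be used throughout: first, a nontrivial class in $\Pic^0$ of an Abelian variety has vanishing cohomology in every degree; second, because $f$ is an isotrivial $K$-bundle, $\R^q f_* \Osh_A \cong \Osh_B \otimes_{\kk} \H^q(K,\Osh_K)$, so the projection formula yields a K\"unneth-type decomposition for the cohomology of any $f^*(\cdot)$.

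For part (i), suppose $\mathcal{L}|_{K}$ is nontrivial. Each fibre of $f$ is a translate of $K$, and since degree-zero bundles are translation invariant, $\mathcal{L}$ restricts on every fibre to the fixed nontrivial class $\mathcal{L}|_{K}$; hence $\mathcal{L}$ has vanishing cohomology on each fibre in all degrees. Cohomology and base change then force $\R^\ell f_* \mathcal{L} = 0$ for all $\ell$, and the Leray spectral sequence gives $\H^\ell(A,\mathcal{L}) = 0$ for all $\ell$. For the higher direct images $\R^\ell {p_{\hat{A}}}_*\!\left(p_A^* \mathcal{L} \otimes \mathcal{P}\right)$ I would appeal to the corresponding structural statements of \cite[Proposition 5.1]{Gulbrandsen:2008} and \cite[Appendix]{Mum:Quad:Eqns}, which refine this fibrewise vanishing through the relative Fourier--Mukai transform for $f$.

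For part (ii), suppose $\mathcal{L}|_{K}$ is trivial. Then $\mathcal{L}$ descends along $f$, giving $\mathcal{L} \cong f^*\bar{\mathcal{L}}$ for a line bundle $\bar{\mathcal{L}}$ on $B$ that is nondegenerate, since $\K(\bar{\mathcal{L}}) = \K(\mathcal{L})/K$ is finite. Writing $i := \operatorname{i}(\bar{\mathcal{L}})$ for its (single-valued) index, Mumford's theorem gives $\H^p(B,\bar{\mathcal{L}}) \neq 0$ if and only if $p = i$. The K\"unneth decomposition above then reads
$$
\H^\ell(A,\mathcal{L}) \cong \bigoplus_{p+q=\ell} \H^p(B,\bar{\mathcal{L}}) \otimes_{\kk} \H^q(K,\Osh_K) \text{,}
$$
so $\H^\ell(A,\mathcal{L}) \neq 0$ exactly for $\ell \in [\,i, i + \dim K\,]$. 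It remains to identify $i$ with $\operatorname{i}(\mathcal{L})$, the positive-root count of the Hilbert polynomial; I would obtain this by comparing Hilbert polynomials, using Poincar\'e reducibility to split $A$ up to isogeny as $K \times K'$ with $K' \to B$ an isogeny, so that $\mathcal{L}$ pulls back from the nondegenerate factor and the positive-root counts on $A$ and $B$ agree. With $i = \operatorname{i}(\mathcal{L})$ and $\dim K = \dim \K(\mathcal{L})$ this is the asserted interval.

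The delicate point---and the step I expect to be the main obstacle---is the single-degree concentration of $\R^\ell {p_{\hat{A}}}_*\!\left(p_A^* \mathcal{L} \otimes \mathcal{P}\right)$ in part (ii). For a twist $\mathcal{L} \otimes \mathcal{P}_{\hat{x}}$ the restriction to $K$ is $\mathcal{P}_{\hat{x}}|_{K}$, nontrivial off the subtorus $\hat{f}(\hat{B}) = \ker(\hat{A} \to \hat{K})$; hence by part (i) the transform is supported on this translate of $\hat{B}$, of codimension $\dim K$. There the fibrewise cohomology is genuinely spread across the whole interval $[\operatorname{i}(\mathcal{L}), \operatorname{i}(\mathcal{L}) + \dim \K(\mathcal{L})]$, yet the derived direct image is a single sheaf placed in the top degree $\operatorname{i}(\mathcal{L}) + \dim \K(\mathcal{L})$. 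A naive base-change argument cannot see this, because the cohomology jumps along the support; the correct bookkeeping combines the $\mathrm{WIT}_{i}$ property of $\bar{\mathcal{L}}$ on $B$ with a Koszul complex built from the conormal directions of $\hat{f}(\hat{B}) \subseteq \hat{A}$, which is acyclic except at the top. This is exactly the content I would invoke from \cite[Appendix]{Mum:Quad:Eqns} and \cite[Proposition 5.1]{Gulbrandsen:2008}, whereas the descent, the K\"unneth computation, and the index identification are routine once the quotient $f \colon A \to A/\K^0(\mathcal{L})$ is set up.
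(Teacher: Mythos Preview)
Your strategy matches the paper's: both reduce to the nondegenerate line bundle $\mathcal{M}$ on $B=A/\K^0(\mathcal{L})$ and read off the cohomology via the K\"unneth-type splitting coming from $f$. For the cohomological statements in (i) and (ii) your argument is essentially a spelled-out version of what the paper obtains by citing \cite[Appendix, Theorem~1 and Lemma~1]{Mum:Quad:Eqns}; the paper also records the descended bundle as $f^{*}\mathcal{M}\otimes\mathcal{P}_{\hat{x}}$, but under the hypothesis $\mathcal{L}|_{K}\cong\Osh_{K}$ that twist lies in $\hat{f}(\hat{B})$ and can be absorbed, so your $\mathcal{L}\cong f^{*}\bar{\mathcal{L}}$ is equivalent.

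The one substantive divergence is the direct-image concentration in (ii). The paper does not use base change or a Koszul resolution along $\hat{f}(\hat{B})\subset\hat{A}$; instead it invokes the Fourier--Mukai compatibility for the flat morphism $f$ and its finite dual $\hat{f}$, namely
\[
\R^{\ell}{p_{\hat{A}}}_{*}\bigl(p_A^{*}f^{*}\mathcal{M}\otimes\mathcal{P}\bigr)\ \simeq\ \hat{f}_{*}\bigl(\R^{\ell-d}{p_{\hat{B}}}_{*}(p_B^{*}\mathcal{M}\otimes\mathcal{Q})\bigr),\qquad d=\dim K,
\]
together with the translation identity $\mathbf{R}{p_{\hat{A}}}_{*}(p_A^{*}((-)\otimes\mathcal{P}_{\hat{x}})\otimes\mathcal{P})\simeq\tau_{\hat{x}}^{*}\mathbf{R}{p_{\hat{A}}}_{*}(p_A^{*}(-)\otimes\mathcal{P})$. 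Since $\mathcal{M}$ is nondegenerate and hence WIT of index $\operatorname{i}(\mathcal{M})=\operatorname{i}(\mathcal{L})$, the right-hand side vanishes unless $\ell=d+\operatorname{i}(\mathcal{L})$, giving the single-degree concentration at once. Your Koszul heuristic points at the same codimension-$d$ shift, but the functorial identity above is the actual mechanism and bypasses the base-change difficulty you flag; it would strengthen your write-up to state and use it directly rather than deferring to \cite{Gulbrandsen:2008} and \cite{Mum:Quad:Eqns}, since those references are precisely what the Proposition is packaging.
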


\begin{proof}
 For (i), if $\mathcal{L}|_{\K^0(\mathcal{L})}$ is nontrivial, then \cite[Appendix Lemma 1]{Mum:Quad:Eqns} implies that $\tau_x^* \mathcal{L}|_{\K^0(\tau^*_x \mathcal{L}) }$ is nontrivial for all $x \in A$.  Hence, by \cite[Appendix Theorem 1 (i)]{Mum:Quad:Eqns}, 
$$
\H^\ell(A,\tau^*_x \mathcal{L}) = \H^\ell(A,\mathcal{L} \otimes \mathcal{P}_{\hat{x}}) = 0 \text{,}
$$
for all $\ell = 0,\dots,g$ and all $x \in A$.  Thus
$$
\R^\ell{p_{\hat{A}}}_* \left( p_A^*\mathcal{L} \otimes \mathcal{P} \right) = 0 \text{,}
$$
for all $\ell = 0,\dots,g$.

For (ii), if $\mathcal{L}|_{\K^0(\mathcal{L})}$ is trivial, then, as noted in \cite[Appendix Theorem 1 (ii)]{Mum:Quad:Eqns}, there exists a nondegenerate line bundle $\mathcal{M}$ on $A / \K^0(\mathcal{L})$ and a point $\hat{x} \in \hat{A}$, which has the two properties that
\begin{itemize}
\item{
$
\mathcal{L} \simeq f^*(\mathcal{M}) \otimes \mathcal{P}_{\hat{x}} \text{; and}
$}
\item{
$\H^\ell(A,\mathcal{L}) \simeq \H^{\operatorname{i}(\mathcal{M})}(A/\K^0(\mathcal{L}),\mathcal{M}) \otimes \H^{\ell - \operatorname{i}(\mathcal{M})}(\K^0(\mathcal{L}), \Osh_{\K^0(\mathcal{L})} )$.
}
\end{itemize}
Moreover, it is known, see \cite[p. 100]{Mum:Quad:Eqns}, that
$$
\operatorname{i}(\mathcal{L}) = \operatorname{i}(\mathcal{M})\text{.}
$$
In particular, the above considerations imply that 
$$
\H^\ell(A,\mathcal{L}) \not = 0
$$
if and only if
$$
\ell \in [\operatorname{i}(\mathcal{L}), \operatorname{i}(\mathcal{L}) + \dim \K(\mathcal{L}) ] \text{.}
$$
It remains to establish vanishing of the higher direct image sheaves 
$$
\R^\ell {p_{\hat{A}}}_* \left(p_A^* \mathcal{L} \otimes \mathcal{P} \right) \text{,}
$$
for 
$$\ell \not = \operatorname{i}(\mathcal{L}) + \dim \K(\mathcal{L})\text{.}$$
To this end, we argue as in \cite[Proof of Proposition 5.1]{Gulbrandsen:2008}.  

To begin with, 
there is a natural isomorphism 
$$
\mathbf{R}{p_{\hat{A}}}_*\left( p_A^*((-)\otimes\mathcal{P}_{\hat{x}})\otimes\mathcal{P}\right) \simeq \tau_{\hat{x}}^* \mathbf{R}{p_{\hat{A}}}_*\left( p_A^*(-)\otimes\mathcal{P}\right) \text{,}
$$
of functors from $\mathbf{D}(A)$ to $\mathbf{D}(\hat{A})$. 
Here, $\mathbf{D}(A)$ and $\mathbf{D}(\hat{A})$ are the respective bounded derived categories of coherent sheaves on $A$ and $\hat{A}$.

Let 
$$
d := \dim \mathrm{K}(\mathcal{L})
$$
and put 
$$
B := A/\K^0(\mathcal{L}) \text{.}
$$
Let $\mathcal{Q}$ be the normalized Poincar\'{e} line bundle on $B \times \hat{B}$.

Then, since 
$$
f \colon A \rightarrow B
$$ 
is flat and since the dual morphism 
$$ 
\hat{f} \colon \hat{B} \rightarrow \hat{A} 
$$
is finite, 
it follows that for all integers $\ell$, 
\begin{align*}
\R^\ell{p_{\hat{A}}}_*(p_A^*f^*(\mathcal{M} \otimes \mathcal{P}_{\hat{x}} ) \otimes  \mathcal{P}) 
& 
\simeq \tau^*_{\hat{x}}(\R^\ell{p_{\hat{A}}}_*(p_A^*f^*(\mathcal{M}) \otimes \mathcal{P} ))  
\\
& 
\simeq \tau^*_{\hat{x}}(\hat{f}_*(\R^{\ell-d}{p_{\hat{B}}}_*(p_B^*\mathcal{M} \otimes \mathcal{Q} )) )\text{.}
\end{align*}
Since $\mathcal{M}$ is nondegenerate, it has a unique nonzero cohomology group $\H^{\operatorname{i}(\mathcal{M})}(B,\mathcal{M})$.
 The claim then follows.
\end{proof}

Finally, we conclude this section by mentioning an observation from \cite[Section 4]{Kuronya:Mustopa:2020}, which pertains to the nature of semihomogeneous vector bundles, and their continuous Castelnouvo-Mumford regularity, on products of nonisogenous Abelian varieties with Picard number one.

\begin{example}
\label{non:isog:pic:one:products}  Consider the case that 
$$A = A_1 \times A_2$$ 
is a product of nonisogenous Abelian varieties, each of which have Picard number equal to one.  Then, since $A_1$ and $A_2$ are not isogeneous and each have Picard number one, it follows that $A$ has Picard number two (see for instance \cite[Proposition 6.2]{Grieve:R-R:abVars}).  Moreover, the N\'{e}ron-Severi group $\NS_{\QQ}(A)$ is generated by the classes of $p_{A_i}^*\lambda_i$, for $i = 1,2$, where $\lambda_1$ and $\lambda_2$ are ample line bundles on $A_1$ and $A_2$, respectively.  

Now, suppose that $\mathcal{E}$ is a rank $r$ semihomogeneous vector bundle on $A$ with restrictions $\mathcal{E}_1$ and $\mathcal{E}_2$ to $A_1 \times \{0\}$ and $\{0\} \times A_2$, respectively.  Then since $\mathcal{E}$ is a semihomogeneous vector bundle on $A$, it follows easily that $\mathcal{E}_1 \boxtimes \mathcal{E
}_2$ is a semihomogeneous vector bundle on $A$.  

Further, since
$$
\det(\mathcal{E}_1 \boxtimes \mathcal{E}_2) = r \cdot p_{A_1}^* \det(\mathcal{E}_1) + r \cdot p_{A_2}^* \det(\mathcal{E}_2)\text{,}
$$
and  
$$\operatorname{det}(\mathcal{E}) = p_{A_1}^* \det(\mathcal{E}_1) + p_{A_2}^* \det(\mathcal{E}_2) \text{,}$$
as classes in $\NS(A)$, it follows that  
$$
\frac{\operatorname{det}(\mathcal{E})}{r} = \frac{\operatorname{det}(\mathcal{E}_1 \boxtimes \mathcal{E}_2)}{r^2} \text{,}
$$
as classes in $\operatorname{NS}_{\QQ}(A)$.  Finally, Theorem 1.3 (i) implies that if $\Osh(1)$ is an ample line bundle on $A$, then
$$
\operatorname{reg}_{\mathrm{cont}}(\mathcal{E},\Osh(1)) = \operatorname{reg}_{\mathrm{cont}}(\mathcal{E}_1 \boxtimes \mathcal{E}_2,\Osh(1)) \text{.}
$$
\end{example}

\section{Generic vanishing theory}\label{GV:theory:prelim}

In this section, we recall the most basic concepts from the \emph{Generic Vanishing Theory}.  We mostly follow \cite[Section 2]{Pareschi:Popa:2003}.
Let $\mathcal{F}$ be a coherent sheaf over a $g$-dimensional Abelian variety $A$.  Fix a nonnegative integer $i \geq 0$ and put
$$
\operatorname{V}^i(A,\mathcal{F}) := \{ \hat{x} \in \hat{A} : \H^i(A,\mathcal{F} \otimes \mathcal{P}_{\hat{x}}) \not = 0 \} \text{.}
$$
This is the \emph{$i$th cohomological support locus of $\mathcal{F}$}.  If 
$$
\operatorname{codim}(\operatorname{V}^i(A,\mathcal{F})) > i
$$
for all $i > 0$, then $\mathcal{F}$ is called \emph{Mukai-regular}.  If
$$
\operatorname{codim}(\operatorname{V}^i(A,\mathcal{F})) \geq i
$$
for all $i>0$, then $\mathcal{F}$ is called a \emph{Generic Vanishing sheaf}.  For example, if $\mathcal{F}$ is Mukai-regular, then it is a Generic Vanishing sheaf.

Now, returning to more general considerations, suppose that  for all  nonempty Zariski open subsets 
$$U \subseteq \hat{A}\text{,}$$ 
 the evaluation map
$$
\bigoplus_{\hat{x} \in U} \H^0(A,\mathcal{F} \otimes \mathcal{P}_{\hat{x}}) \otimes \mathcal{P}^{-1}_{\hat{x}} \rightarrow \mathcal{F}
$$
is surjective.  Then, within this context, $\mathcal{F}$ is called \emph{continuously globally generated} \cite[Definition 2.10]{Pareschi:Popa:2003} (compare also with \cite[Proposition 2.13]{Pareschi:Popa:2003} and \cite[Definition 1.3]{Kuronya:Mustopa:2020}).  

On the other hand, if
$$
\R^i{p_{\hat{A}*}}\left( p^*_A \mathcal{F} \otimes \mathcal{P}\right) \not = 0
$$
for at most one 
$$i \in \{0,\dots,g\}\text{,}$$ 
then $\mathcal{F}$ is called a \emph{Weak Index Theorem sheaf}.  If
$$
\R^j{p_{\hat{A}*}}(p_A^*\mathcal{F} \otimes \mathcal{P}) = 0
$$
for all but one $j$, then, here, the  \emph{weak index} of $\mathcal{F}$ is defined to be the unique integer 
$$j(\mathcal{F}) \in \{0,\dots,g\}$$ 
for which 
$$
\R^{\mathrm{j}(\mathcal{F})}{p_{\hat{A}*}}(p_A^*\mathcal{F} \otimes \mathcal{P}) \not = 0 \text{.}
$$
If
$$
\operatorname{V}^j(A,\mathcal{F}) \not = \emptyset
$$
for at most one 
$$j \in \{0,\dots,g\}\text{,}$$ 
then $\mathcal{F}$ is called an \emph{Index Theorem sheaf}.  

Equivalently, $\mathcal{F}$ is called an \emph{Index Theorem sheaf} if there exists 
$$j \in \{0,\dots,g\}$$ 
so that
$$
\H^\ell(A,\mathcal{F} \otimes \mathcal{P}_{\hat{x}}) = 0 \text{,}
$$
for all $\ell \not = j$ 
and all $\hat{x} \in \hat{A}\text{.}$

Recall, that if an Index Theorem sheaf $\mathcal{F}$ is nonzero, 
then the cohomology and flat base change theorem, \cite[p.~51]{Mum:v1}, implies that it is a Weak Index Theorem sheaf.

The following example illustrates Proposition \ref{line:bundle:index} and Theorem \ref{semi:homog:index:thm}, in addition to the concepts of Weak Index Theorem and Index Theorem sheaves.  It also provides an illustration of \cite[Appendix Theorem 1 (i)]{Mum:Quad:Eqns}.

\begin{example}\label{weak:index:E:E:example}
Let $E$ be an elliptic curve and consider its self-product
$$A = E \times E.$$ 
Fix a point $x \in A$ and put
$$ \mathcal{L} = \Osh_E \boxtimes \Osh_E(x)\text{.}$$ 
Then
$$\H^0 (A,\mathcal{L}) \simeq \H^0(E,\Osh_E) \otimes \H^0(E,\Osh_E(x)),$$ 
$$\H^1(A,\mathcal{L}) \simeq  \H^1(E,\Osh_E) \otimes \H^0(E, \Osh_E(x))$$ 
and 
$$\H^2(A,\mathcal{L}) = 0.$$
Thus
$$h^0(A,\mathcal{L}) \not = 0 \text{, } h^1(A,\mathcal{L}) \not = 0 \text{ and } h^2(A,\mathcal{L}) = 0 \text{.}$$  
Now observe that if
$$\mathcal{M} := \Osh_E(x)\text{,}$$ 
then
$$
\mathcal{L} = f^* \mathcal{M} \text{,}
$$
for 
$$f = p_2$$ 
the projection of $A$ onto its second factor.  This is the canonical homomorphism 
$$f \colon A \rightarrow A / \K^0(\mathcal{L})\text{.}$$ 
Also
$$\operatorname{i}(\mathcal{M}) = 0 \text{.}$$
Further
$$\dim \K(\mathcal{L}) = 1$$
and
$$
\operatorname{j}(\mathcal{L}) = \dim \K(\mathcal{L}) + \operatorname{i}(\mathcal{M}) = 1\text{.}
$$
Finally
$$
\operatorname{i}(\mathcal{L}) = 0
\text{,}$$
since the Hilbert polynomial 
$$\chi(\mathcal{M}^{\otimes N} \boxtimes \mathcal{M}^{\otimes N} \otimes \mathcal{L}) = N(N+1) $$
 has roots $N = 0$ and $N = -1$.  

This shows that, in general, the \emph{weak index} $\operatorname{j}(\mathcal{L})$ need not equal the \emph{index} $\operatorname{i}(\mathcal{L})$.  On the other hand,
Proposition \ref{line:bundle:index} shows that $\mathcal{L}$ is a Weak Index Theorem sheaf with weak index $\operatorname{j}(\mathcal{L}) = 1$.  But on the other hand, since 
$$h^0(A,\mathcal{L}) \not = 0\text{,}$$ 
$\mathcal{L}$ is not an Index Theorem sheaf.
\end{example}

The preceding discussion, as it applies, in particular, to simple semihomogeneous vector bundles is summarized in Theorem \ref{semi:homog:index:thm} below.  Its statement includes a key aspect to the proof of  \cite[Theorem A]{Kuronya:Mustopa:2020}, for example, namely the fact that the weak index of a simple semihomogeneous vector bundle is the same as that of its determinant line bundle.  Theorem \ref{semi:homog:index:thm} also allows Corollary \ref{Semi:homog:RR} to be deduced from Theorem \ref{R-R-index-lb}.  (See Section \ref{proof:reduced:norm:cont:reg}.)

\begin{theorem}\label{semi:homog:index:thm}
Let $\mathcal{E}$ be a simple semihomogeneous vector bundle on a $g$-dimensional Abelian variety $A$.  
The following assertions hold true.
\begin{enumerate}
\item[(i)]{
If $\Osh(1)$ is an ample line bundle on $A$, then the roots of the Hilbert polynomial
$$
\operatorname{HilbPoly}_{\Osh(1)}(\mathcal{E}) := \chi(\mathcal{E} (N))
$$
are real. The number of negative roots, counted with multiplicity is 
$$g - \operatorname{i}(\mathcal{E}) - \dim \K(\mathcal{E}) \text{.}$$
Further
$$
\H^\ell(A,\mathcal{E}) = 0 \text{, }
$$
for all 
$$0 \leq \ell < \operatorname{i}(\mathcal{E})\text{,}$$ 
and 
$$
\H^{g-\ell}(A,\mathcal{E}) = 0 \text{,}
$$
for all 
$$0 \leq \ell < g - \operatorname{i}(\mathcal{E}) - \dim \K(\mathcal{E})\text{.}$$ 
}
\item[(ii)]{
Both $\mathcal{E}$ and $\operatorname{det}(\mathcal{E})$ are Weak Index Theorem sheaves and 
$$
\operatorname{j}(\mathcal{E}) = \operatorname{j}(\operatorname{det}(\mathcal{E})) \text{.}
$$
}
\item[(iii)]{
If $\mathcal{E}$ is nondegenerate, then $\mathcal{E}$ is an Index Theorem sheaf and 
$$\operatorname{j}(\mathcal{E}) = \operatorname{i}(\mathcal{E}) \text{.}$$
}
\item[(iv)]{
$\mathcal{E}$ is nondegenerate if and only if $\operatorname{det}(\mathcal{E})$ is nondegenerate.  In this case 
$$
\operatorname{j}(\mathcal{E}) = \operatorname{i}(\mathcal{E}) = \operatorname{i}(\operatorname{det}(\mathcal{E})) = \operatorname{j}(\operatorname{det}(\mathcal{E})) \text{.}
$$
}
\end{enumerate}
\end{theorem}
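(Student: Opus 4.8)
The plan is to reduce every assertion to the case of a line bundle, by means of two complementary devices, and then to read off the conclusions from Proposition \ref{line:bundle:index} together with Mukai's numerical formulae.

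For the reality of the roots and the counts of positive, zero and negative roots in (i), I would pass to the determinant. Twisting by $\Osh(N)$ preserves semihomogeneity, and $\mathcal{E}(N)$ has rank $r$ with $\det(\mathcal{E}(N)) = \det(\mathcal{E}) \otimes \Osh(N)^{\otimes r}$, so Mukai's identity $\chi(\mathcal{E}) = \chi(\det(\mathcal{E}))/r^{g-1}$ \cite[Proposition 6.12]{Muk78}, applied to $\mathcal{E}(N)$, gives
$$
\chi(\mathcal{E}(N)) = \frac{1}{r^{g-1}} \cdot \frac{\left( rN\lambda + \det(\mathcal{E}) \right)^g}{g!} \text{,}
$$
where $\lambda$ is the class of $\Osh(1)$. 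Thus $\operatorname{HilbPoly}_{\Osh(1)}(\mathcal{E})$ is, up to the positive constant $r^{1-g}$ and the substitution $M = rN$, exactly the Hilbert polynomial of the line bundle $\det(\mathcal{E})$. Since $r>0$ this substitution preserves the sign of every root, so the Mumford--Kempf index theorem \cite[Appendix Theorem 2]{Mum:Quad:Eqns} for $\det(\mathcal{E})$ shows the roots are real, that the number of positive roots equals $\operatorname{i}(\det(\mathcal{E}))$, whence $\operatorname{i}(\mathcal{E}) = \operatorname{i}(\det(\mathcal{E}))$, and that the multiplicity of the zero root equals $\dim \K(\det(\mathcal{E})) = \dim \K(\mathcal{E})$ by \cite[Corollary 7.9]{Muk78}. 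The count of negative roots is then $g - \operatorname{i}(\mathcal{E}) - \dim \K(\mathcal{E})$. For the vanishing of $\H^\ell(A,\mathcal{E})$, I would use the second device: by Mukai's characterization there is an isogeny $f \colon B \to A$ and a line bundle $\mathcal{L}$ on $B$ with $\mathcal{E} \simeq f_*(\mathcal{L})$, and since $f$ is finite the Leray spectral sequence gives $\H^\ell(A,\mathcal{E}) \simeq \H^\ell(B,\mathcal{L})$. The transfer of Euler characteristics \eqref{index:well:defined:relation} shows that $\mathcal{L}$, with respect to $f^*\Osh(1)$, has the same Hilbert polynomial as $\mathcal{E}$, so $\operatorname{i}(\mathcal{L}) = \operatorname{i}(\mathcal{E})$ and $\dim\K(\mathcal{L}) = \dim\K(\mathcal{E})$. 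Proposition \ref{line:bundle:index}(ii), applied to $\mathcal{L}$, says $\H^\ell(B,\mathcal{L}) \neq 0$ precisely for $\ell \in [\operatorname{i}(\mathcal{L}),\operatorname{i}(\mathcal{L})+\dim\K(\mathcal{L})]$, which rewrites as the two stated vanishing ranges for $\H^\ell(A,\mathcal{E})$ and $\H^{g-\ell}(A,\mathcal{E})$.

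For (ii) I would compute Fourier--Mukai transforms. Applying $\R^\ell{p_{\hat{A}}}_*(p_A^*(-)\otimes\mathcal{P})$ to $\mathcal{E} = f_*(\mathcal{L})$ and using the base-change and finiteness identities from the proof of Proposition \ref{line:bundle:index} (pushing the transform of $\mathcal{L}$ forward along the finite dual isogeny $\hat{f}$), the transform of $\mathcal{E}$ is concentrated in the single degree $\operatorname{i}(\mathcal{L}) + \dim\K(\mathcal{L})$; hence $\mathcal{E}$ is a Weak Index Theorem sheaf with $\operatorname{j}(\mathcal{E}) = \operatorname{i}(\mathcal{E}) + \dim\K(\mathcal{E})$. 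Applying Proposition \ref{line:bundle:index}(ii) directly to the line bundle $\det(\mathcal{E})$ gives in the same way $\operatorname{j}(\det(\mathcal{E})) = \operatorname{i}(\det(\mathcal{E})) + \dim\K(\det(\mathcal{E}))$. The matched invariants $\operatorname{i}(\mathcal{E}) = \operatorname{i}(\det(\mathcal{E}))$ and $\dim\K(\mathcal{E}) = \dim\K(\det(\mathcal{E}))$ then yield $\operatorname{j}(\mathcal{E}) = \operatorname{j}(\det(\mathcal{E}))$. That the transforms are genuinely nonzero, so that the weak index is well defined, follows because a nonzero coherent sheaf has nonzero Fourier--Mukai transform.

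Finally (iii) and (iv) are bookkeeping. If $\mathcal{E}$ is nondegenerate then $\chi(\mathcal{E}) \neq 0$, so $\K(\mathcal{E})$ is finite and $\dim\K(\mathcal{E}) = 0$; part (i) then forces $\H^\ell(A,\mathcal{E}) = 0$ for all $\ell \neq \operatorname{i}(\mathcal{E})$. Each translate $\mathcal{E}\otimes\mathcal{P}_{\hat{x}}$ is again simple semihomogeneous with the same numerical class $\det(\mathcal{E})/r$, hence the same index, so this vanishing holds for every $\hat{x}$; thus $\mathcal{E}$ is an Index Theorem sheaf, and cohomology and base change \cite[p.~51]{Mum:v1} give $\operatorname{j}(\mathcal{E}) = \operatorname{i}(\mathcal{E})$. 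For (iv), Mukai's formula shows $\mathcal{E}$ and $\det(\mathcal{E})$ are simultaneously (non)degenerate, and in the nondegenerate case chaining $\operatorname{j}(\mathcal{E}) = \operatorname{i}(\mathcal{E})$, then $\operatorname{i}(\mathcal{E}) = \operatorname{i}(\det(\mathcal{E}))$, and finally the line-bundle equality $\operatorname{i}(\det(\mathcal{E})) = \operatorname{j}(\det(\mathcal{E}))$ (part (iii) applied to the rank-one bundle $\det(\mathcal{E})$) yields the asserted string of equalities. The main obstacle I anticipate is the careful matching of the three sets of invariants attached to $\mathcal{E}$, $\det(\mathcal{E})$ and $\mathcal{L}$, and confirming that twisting by $\mathcal{P}_{\hat{x}}$ leaves every numerical quantity unchanged, so that the Index Theorem sheaf conclusion is uniform in $\hat{x}$.
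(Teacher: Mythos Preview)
Your proposal is correct and follows the same overall architecture as the paper: reduce everything to line bundles via the determinant formula $\chi(\mathcal{E}(N))=\chi(\det(\mathcal{E}(N)))/r^{g-1}$ and via Mukai's presentation $\mathcal{E}\simeq f_*\mathcal{L}$, then invoke the line-bundle index theorem. The details differ slightly in two places. For (ii), the paper shows $\operatorname{j}(\mathcal{E})=\operatorname{j}(\det(\mathcal{E}))$ by chaining $\operatorname{j}(\mathcal{E})=\operatorname{j}(\mathcal{L})=\operatorname{j}(\mathcal{L}^{\otimes d})=\operatorname{j}(f^*\det(\mathcal{E}))=\operatorname{j}(\det(\mathcal{E}))$ through compatibilities of the Fourier--Mukai transform with $f_*$ and $f^*$; you instead compute each weak index explicitly as $\operatorname{i}+\dim\K$ via Proposition~\ref{line:bundle:index}(ii) and match the numerical invariants, which is arguably more direct. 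For (iii), the paper transfers the Index Theorem property from $\mathcal{L}$ to $\mathcal{E}$ through the projection-formula isomorphism $\H^i(B,\mathcal{L}\otimes\mathcal{Q}_{\hat{y}})\simeq\H^i(A,\mathcal{E}\otimes\mathcal{P}_{\hat{x}})$, whereas you reapply (i) to each simple semihomogeneous twist $\mathcal{E}\otimes\mathcal{P}_{\hat{x}}$; both work. Two small cautions: first, Proposition~\ref{line:bundle:index}(ii) carries the hypothesis that $\mathcal{L}|_{\K^0(\mathcal{L})}$ is trivial, so when you invoke it in (i) you should either split off the nontrivial case (where all cohomology vanishes and the conclusion is trivial) or, as the paper does, appeal directly to \cite[Appendix Theorem~2]{Mum:Quad:Eqns}; second, in your Fourier--Mukai step the transform of $\mathcal{E}=f_*\mathcal{L}$ is $\hat{f}^*$ (pullback along $\hat{f}\colon\hat{A}\to\hat{B}$) of the transform of $\mathcal{L}$, not pushforward---though this slip does not affect the conclusion, since either operation preserves concentration in a single degree.
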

\begin{proof} 
Assertion (i) is a more general form of \cite[Proposition 2.1]{Grieve-cup-prod-ab-var}.  It is established similarly, and follows as an application of the Leray spectral sequence and the strong form of the Index Theorem, as established by Kempf and Ramanujam \cite[Appendix Theorem 2]{Mum:Quad:Eqns}.  
It is also helpful to recall that the index $\operatorname{i}(\mathcal{E})$ is independent of the choice of polarization (Lemma \ref{Index:Well:Defined:Lemma}).
 
Indeed, in light of Lemma \ref{Index:Well:Defined:Lemma}, the proof of assertion (i) is complete upon verification that $\dim \K(\mathcal{E})$ is the multiplicity of $0$ as a root of $\chi(\mathcal{E}(N))$. 
 
For that, let $r$ be the rank of $\mathcal{E}$.  Then, observe, from \cite[Corollary 7.9]{Muk78}, that 
$$
 \dim \K(\mathcal{E})  = \dim \K(\operatorname{det}(\mathcal{E}))  \text{.}
 $$
 Then since 
 $$
 \dim \K(\operatorname{det}(\K(\mathcal{E})) = \dim \K( \operatorname{det}(\mathcal{E})^{\otimes r} ) \text{,}
 $$
 the conclusion is that
 $$
 \dim \K(\mathcal{E}) = \dim \K(\operatorname{det}(\mathcal{E})^{\otimes r}) \text{.}
 $$
 
 Thus, the fact that $\dim \K(\mathcal{E})$ is the multiplicity of $0$, as a root of $\chi(\mathcal{E}(N))$, follows from the relation that
 $$
\chi(\mathcal{E}(N)) = \frac{\chi(\det(\mathcal{E}(N)))}{r^{g-1}} \text{,}
$$
see \cite[Proposition 6.12]{Muk78},
combined with the fact that $\dim \K( \operatorname{det}(\mathcal{E})^{\otimes r } )$ is the multiplicity of $0$, as a root of the polynomial
$$\chi(\det(\mathcal{E}(N))) = \chi(\operatorname{det}(\mathcal{E})^{\otimes r} \otimes \Osh(N) ) \text{.}$$

As explained in the discussion that proceeds \cite[Proposition 1.17]{Kuronya:Mustopa:2020},  assertions (ii), (iii) and (iv) are established in \cite[Propositions 6.2 and 6.3]{Gulbrandsen:2008}.  

For completeness, let us indicate the key points used to establish the result from \cite[Proof of Proposition 6.3]{Gulbrandsen:2008},  namely that each simple semihomogeneous vector bundle $\mathcal{E}$ over $A$ is a Weak Index Theorem sheaf.

Fix an isogeny
$$
f \colon B \rightarrow A
$$
which has the property that
$$
f_*(\mathcal{L}) \simeq \mathcal{E}
$$
for some line bundle $\mathcal{L}$ on $B$.  Let 
$$
\hat{f} \colon \hat{A} \rightarrow \hat{B}
$$
denote the dual isogeny. Let $\mathcal{Q}$ denote the normalized Poincar\'{e} line bundle on $B \times \hat{B}$.

By Proposition \ref{line:bundle:index}, $\mathcal{L}$ is a Weak Index Theorem sheaf on $B$.  On the other hand, for each $i$,
$$
\R^i{p_{\hat{A}*}}(p_A^*(f_* \mathcal{L})\otimes\mathcal{P}) \simeq \hat{f}^*(\R^i{p_{\hat{B}*}}(p_B^*(\mathcal{L})\otimes\mathcal{Q})) \text{.}
$$
Thus 
$$
\R^i{p_{\hat{A}*}}(p_A^*(\mathcal{E})\otimes\mathcal{P}) \simeq \hat{f}^*(\R^i{p_{\hat{B}*}}(p_B^*(\mathcal{L})\otimes\mathcal{Q}))
$$
and so $\mathcal{E}$ is a Weak Index Theorem sheaf and both $\mathcal{E}$ and $\mathcal{L}$ have the same weak index.

Next, we establish the equality
$$
\operatorname{j}(\mathcal{E}) = \operatorname{j}(\operatorname{det}(\mathcal{E})) \text{.}
$$
Let
$$
d := \operatorname{deg}(f)
$$
be the degree of $f$.  Then
$
f^*(\operatorname{det}(\mathcal{E}))
$ is algebraically equivalent to  $\mathcal{L}^{\otimes d}$
and thus
$$
\operatorname{j}(f^*(\operatorname{det}(\mathcal{E}))) = \operatorname{j}(\mathcal{L}) \text{,}
$$
since 
$$
\dim \K(\mathcal{L}^{\otimes d}) = \dim \K(\mathcal{L}) \text{.}
$$
(Apply \cite[Corollary 5.3]{Gulbrandsen:2008} or Proposition \ref{line:bundle:index}.)

On the other hand
$$
\R^i{p_{\hat{B}*}}(p_B^*(f^*(\operatorname{det}(\mathcal{E})))\otimes \mathcal{Q}) \simeq \hat{f}_*(\R^i{p_{\hat{A}*}}(p_A^*(\operatorname{det}(\mathcal{E})))\otimes \mathcal{P})
$$
for all $i$.  Thus
$$
\operatorname{j}(f^*(\operatorname{det}(\mathcal{E}))) = \operatorname{j}(\operatorname{det}(\mathcal{E})) \text{.}
$$
Finally, with respect to the dual isogeny, $\hat{f}$ each 
$\hat{y} \in \hat{B}$ 
is a pullback 
$$\hat{y}=f^*(\hat{x})$$ 
for some 
$\hat{x} \in \hat{A}\text{.}$  Thus, by the projection formula
$$
\H^i(B,\mathcal{L} \otimes \mathcal{Q}_{\hat{y}}) \simeq \H^i(A,f_*(\mathcal{L} \otimes \mathcal{Q}_{\hat{y}})) \simeq \H^i(A,\mathcal{E} \otimes \mathcal{P}_{\hat{x}}) \text{.}
$$
The above discussion implies that $\mathcal{L}$ is an Index Theorem sheaf if and only if $\mathcal{E}$ is an Index Theorem sheaf.  But, if $\mathcal{E}$ is nondegenerate, then so is $\operatorname{det}(\mathcal{E})$.  Thus
$$
d^g \cdot \chi(\mathcal{L}) = \chi\left(\mathcal{L}^{\otimes d}\right) = d \cdot \chi(\operatorname{det}(\mathcal{E})) \not = 0 \text{.}
$$
Thus, $\mathcal{L}$ is nondegenerate too.  Thus $\mathcal{L}$ is an Index Theorem sheaf.
It follows that $\mathcal{E}$ is an Index Theorem sheaf too.
\end{proof}

\section{Proof of Theorem \ref{R-R-index-lb}, Corollary \ref{Semi:homog:RR} and Theorem  \ref{reduced:norm:cont:reg}}\label{proof:reduced:norm:cont:reg}

In this section, we prove Theorem \ref{R-R-index-lb}, Corollary \ref{Semi:homog:RR} and Theorem \ref{reduced:norm:cont:reg}.  

\begin{proof}[Proof of Theorem \ref{R-R-index-lb}] 
Assertion (i) follows from \cite[Theorem 4.1]{Grieve:R-R:abVars} and assertion (ii) follows from \cite[Theorem 4.4]{Grieve:R-R:abVars}.  For (iii), let $D$ be the class of $\mathcal{L}$ in $\operatorname{NS}(B)$.  Then, by (ii), $\operatorname{i}(\mathcal{L})$ is the number of positive roots of the polynomial $p_{D,f^*\lambda}(n)$.  Moreover, this polynomial has a total of $g-\operatorname{i}(\mathcal{L}) - \dim \K(\mathcal{L})$ negative roots.  Thus, assertion (iii) follows from \cite[Appendix Theorem 2]{Mum:Quad:Eqns}.
\end{proof}

\begin{proof}[Proof of Corollary \ref{Semi:homog:RR}] 
 By Theorem \ref{semi:homog:index:thm},  the index of a simple semihomogeneous vector bundle $\mathcal{E}$ is equal to the index of its determinant line bundle $\operatorname{det}(\mathcal{E})$.  Thus, Corollary \ref{Semi:homog:RR} follows from Theorem \ref{R-R-index-lb}.
\end{proof}

Before establishing Theorem \ref{reduced:norm:cont:reg}, let us recall the concept of  \emph{continuous Castelnuovo-Mumford regularity} for coherent sheaves $\mathcal{F}$ on an Abelian variety $A$.  Indeed, fixing an ample line bundle $\Osh(1)$ on $A$,   the \emph{continuous Castelnuovo-Mumford regularity} of $\mathcal{F}$, with respect to $\Osh(1)$, is defined to be
$$
\operatorname{reg}_{cont}(\mathcal{F},\mathcal{O}(1)) := \min\{m \in \ZZ : \operatorname{V}^i(A,\mathcal{F}(m-i)) \not= \hat{A} \text{ for all $i > 0$}\} \text{.}
$$

\begin{proof}[Proof of {Theorem \ref{reduced:norm:cont:reg}}]   First of all, recall  that the polarization $\lambda$ allows for the identification
\begin{equation}\label{NS:End:identification}
\operatorname{NS}_{\mathbb{Q}}(B) := \operatorname{NS}(B) \otimes_{\mathbb{Z}}\mathbb{Q} \simeq \operatorname{End}^0_{f^*\lambda}(B)\text{.}
\end{equation}
Let $\eta$ be the class of $f^*\lambda$ in $\operatorname{NS}_{\mathbb{Q}}(B)$.  Then, under the isomorphism \eqref{NS:End:identification}, the class $\eta$ is identified with $\operatorname{id}_B$ the identity morphism of $B$.  

Now let  $\mathcal{E}$ be a semihomogeneous vector bundle on $B$ and assume that the class
$$
\frac{\operatorname{det}(\mathcal{E})}{\operatorname{rank}(\mathcal{E})} \in \NS_{\QQ}(B)
$$
is identified with $\gamma$.

Then, since $\mathcal{E}$ is a semihomogeneous vector bundle on $B$, it can be written in the form
$$
\mathcal{E} \simeq \bigoplus_{j} \mathcal{E}_j \otimes \mathcal{U}_j
$$
where each of the $\mathcal{E}_j$ are simple semihomogeneous vector bundles and where each of the $\mathcal{U}_j$ are unipotent.  (See \cite[Proposition 6.18]{Muk78} or \cite[Proposition 1.13]{Kuronya:Mustopa:2020}.) In particular, each of the unipotent bundles $\mathcal{U}_j$ admits a filtration of the form
$$
0 = \mathcal{U}_{0,r_j}\subseteq \hdots \subseteq \mathcal{U}_{j , r_j} = \mathcal{U}_j
$$
where each of the successive quotients
$$
\mathcal{U}_{k+1,j} / \mathcal{U}_{k,j} \text{,}
$$
for $0 \leq k \leq r_j -1$,
are trivial.  Moreover, 
inside of $\NS_\QQ(B)$, the classes  
$$\frac{\det(\mathcal{E})}{\operatorname{rank}(\mathcal{E})}$$ 
and 
$$\frac{\operatorname{det}(\mathcal{E}_j \otimes \mathcal{U}_j)}{\operatorname{rank}(\mathcal{E})}$$ 
are the  same for all $j$.  Thus, by Theorem \ref{Kuronya:Mustopa:ThmA} (i),
$$
\operatorname{reg}_{\mathrm{cont}}(\mathcal{E},\Osh(1)) = \operatorname{reg}_{\mathrm{cont}}(\mathcal{E}_j,\Osh(1)) \text{.}
$$
To finish the proof of Theorem \ref{reduced:norm:cont:reg}, it thus suffices to treat the case that  $\mathcal{E}$ is a simple semihomogeneous vector bundle.  

To that end, let $\mathcal{E}$ be a simple semihomogeneous vector bundle on $B$ with the property that the class
$$
 \frac{ \operatorname{det}(\mathcal{E})}{ \operatorname{rank}(\mathcal{E}) } \in \NS_\QQ(B)
$$
is identified with $\gamma$.

By Theorem \ref{Kuronya:Mustopa:ThmA} (i),
$$
\operatorname{reg}_{\mathrm{cont}}(\mathcal{E},f^*\lambda) = \rho_\eta \left( \gamma \right)\text{,}
$$
where $\rho_\eta(\gamma)$ is the minimum integer $m \in \mathbb{Z}$ for which either the class 
\begin{equation}\label{cont:CM:reg:class}
\gamma + (m-i)\eta
\end{equation}
is degenerate or fails to have index $i$, for all 
$$i \in \{1,\dots,g\}.$$
Let 
$$\alpha \in \operatorname{End}^0_\lambda(A)$$ 
be the image of 
$$\gamma \in \operatorname{End}^0_{f^*\lambda}(B)$$ 
under the natural homomorphism that is induced by $f$.  

Then, by Theorem \ref{R-R-index-lb}, these above conditions on the class \eqref{cont:CM:reg:class} 
translate into the assertion that either 
\begin{equation}\label{reduced:norm:degenerate}
\operatorname{pNrd}_\lambda((m-i)\id_A + \alpha) = 0
\end{equation}
or the polynomial
\begin{equation}\label{reduced:norm:index}
\operatorname{pNrd}_\lambda((N+m-i)\id_A + \alpha)
\end{equation}
fails to have $i$ positive roots (counted with multiplicities) for all 
$$i \in \{1,\dots,g\}\text{.}$$
Indeed, that the class \eqref{cont:CM:reg:class} is degenerate means that it has zero Euler characteristic
\begin{equation}\label{reduced:norm:degenerate:eqn2}
\chi(\gamma + (m-i) \eta) = 0 \text{.}
\end{equation}
By Theorem \ref{R-R-index-lb}, this is equivalent to vanishing of the polynomial \eqref{reduced:norm:degenerate}.  

Similarly, the condition that the class \eqref{cont:CM:reg:class} fails to have index $i$, for all 
$$i \in \{1,\dots,g\}\text{,}$$
means that the polynomial \eqref{reduced:norm:index} fails to have $i$ positive roots (counted with multiplicities) for all 
$$i \in \{1,\dots,g\}\text{.}$$
\end{proof}

\providecommand{\bysame}{\leavevmode\hbox to3em{\hrulefill}\thinspace}
\providecommand{\MR}{\relax\ifhmode\unskip\space\fi MR }
\providecommand{\MRhref}[2]{%
  \href{http://www.ams.org/mathscinet-getitem?mr=#1}{#2}
}
\providecommand{\href}[2]{#2}

\end{document}